\newtheorem {Lemma}{Lemma}[section]
\newtheorem {Theorem} {Theorem}[section]
\newtheorem {Corollary}{Corollary}[section]
\begin{document}

\title{On the $\alpha$-spectral radius of uniform hypergraphs}

\author{Haiyan Guo\footnote{Email: 494565762@qq.com},
Bo Zhou\footnote{Corresponding author. E-mail: zhoubo@scnu.edu.cn}\\
School of  Mathematical Sciences, South China Normal University, \\
Guangzhou 510631, P.R. China
}

\date{}
\maketitle

\begin{abstract}
For $0\le\alpha<1$ and a uniform hypergraph $G$, the $\alpha$-spectral radius of $G$ is the largest $H$-eigenvalue of
$\alpha \mathcal{D}(G) +(1-\alpha)\mathcal{A}(G)$,
 where $\mathcal{D}(G)$ and $\mathcal{A}(G)$ are the diagonal tensor of degrees and the adjacency tensor of $G$, respectively.
We give upper bounds for the $\alpha$-spectral radius of a uniform hypergraph, propose some transformations that increase the $\alpha$-spectral radius, and determine the unique hypergraphs with maximum $\alpha$-spectral radius in some classes of uniform hypergraphs.
\\ \\
\noindent{\bf MSC:} 05C50, 05C65\\ \\
\noindent {\bf Keywords:} $\alpha$-spectral radius, $\alpha$-Perron vector,  adjacency tensor,   uniform hypergraph, extremal hypergraph
\end{abstract}

\section{Introduction}

Let $G$ be a hypergraph on $n$ vertices with vertex set $V(G)$ and edge set $E(G)$. If  $|e|=k$ for each $e\in E(G)$, then $G$ is said to be a $k$-uniform hypergraph. For a vertex $v\in V(G)$,
 the set of the edges containing $v$ in $G$ is denoted by $E_G(v)$, and
the degree of  $v$ in $G$, denoted by  $d_G(v)$ or $d_v$, is  the size of $E_G(v)$. 
We say that $G$ is regular if all vertices of $G$ have equal degrees. Otherwise, $G$ is irregular.

For $u, v\in V(G)$, a  walk  from $u$ to $v$ in $G$ is defined to be an alternating  sequence of  vertices and edges
$(v_0,e_1,v_1,\dots,v_{s-1},e_s, v_s)$ with $v_0=u$ and $v_s=v$ such that
edge $e_i$ contains vertices
$v_{i-1}$ and $v_i$, and  $v_{i-1}\ne v_i$ for $i=1,\dots,s$. The value $s$ is the length of this walk.
A path is a walk with all $v_i$ distinct and all $e_i$ distinct.
A cycle is a walk containing at least
two edges, all $e_i$ are distinct and all $v_i$ are distinct except $v_0=v_s$.
If there is a path from $u$ to $v$ for any $u,v\in V(G)$, then we say that $G$ is connected.
A hypertree is a connected hypergraph with no cycles. For $k\ge 2$, the number of vertices of a
$k$-uniform hypertree with $m$ edges is $1+(k-1)m$.


The distance between vertices $u$ and $v$ in a connected hypergraph $G$ is the length of a shortest path from $u$ to $v$ in  $G$. The diameter of connected hypergraph $G$ is the maximum distance between any two vertices of $G$.

For positive integers $k$ and $n$, a  tensor $\mathcal{T}=(T_{i_1\dots i_k})$ of order $k$ and dimension $n$ is a multidimensional array
with entries $\mathcal{T}_{i_1\dots i_k}\in \mathbb{C}$ for $i_j \in [n]=\{1, \dots, n\}$ and  $j\in[k]$,  where $\mathbb{C}$ is the complex field.

Let $\mathcal{M}$  be a  tensor of order $k\geq 2$  and  dimension $n$, and $\mathcal{N}$ a tensor of order $\ell\ge 1$ and  dimension $n$.
The product $\mathcal{M}\mathcal{N}$ is the tensor of order $(k-1)(\ell-1)+1$ and dimension $n$ with entries \cite{Sh}
\[
(\mathcal{M}\mathcal{N})_{ij_{1}\dots j_{k-1}}=\sum_{i_{2},\dots ,i_{k}\in [n]}\mathcal{M}_{ii_{2}\dots i_{k}}\mathcal{N}_{i_{2}j_{1}}\cdots \mathcal{N}_{i_{k}j_{k-1}},
\]
with $i\in [n]$ and $j_{1},\dots ,j_{k-1}\in [n]^{\ell-1}$. Then
for  a  tensor $\mathcal{T}$ of order $k$ and dimension $n$ and  an $n$-dimensional vector  $x=(x_1,\ldots, x_n)^\top$, 
$\mathcal{T}x$ is  an $n$-dimensional vector whose $i$-th entry is
\[(\mathcal{T}x)_i=\sum_{i_2,\ldots, i_k=1}^n \mathcal{T}_{ii_2\dots i_k}x_{i_2}\cdots x_{i_k},\]
where $i\in[n]$. Let $x^{[r]}=(x_1^r,\ldots,x_n^r)^\top$.
For some complex $\lambda$, if there is a nonzero vector $x$ such that
\[\mathcal{T}x=\lambda x^{[k-1]},\]
then $\lambda$ is called an eigenvalue of $\mathcal{T}$, and $x$ is called an eigenvector of $\mathcal{T}$ corresponding to $\lambda$. Moreover, if both $\lambda$ and $x$ are real, then we call $\lambda$ an  $H$-eigenvalue and $x$ an $H$-eigenvector of $\mathcal{T}$. See \cite{Lim,Qi05,Qi} for more details. The spectral radius  of $\mathcal{T}$ is the largest modulus of its eigenvalues, denoted by
$\rho(\mathcal{T})$.

Let $G$ be a  $k$-uniform hypergraph with vertex set $V(G)=\{v_1, \dots, v_n\}$, where $k\ge 2$.
The adjacency tensor of $G$  is defined in \cite{CD} as the tensor $\mathcal{A}(G)$ of order $k$ and dimension $n$  whose $(i_1,\dots, i_k)$-entry is $\frac{1}{(k-1)!}$ if $ \{v_{i_1}, \dots, v_{i_k}\}\in E(G)$, and $0$ otherwise.
The degree tensor of $G$ is the diagonal tensor $\mathcal{D}(G)$ of order $k$ and dimension $n$ with  $(i,\dots, i)$-entry to be the degree of vertex $v_i\in[n]$.
Then $\mathcal{\mathcal{Q}}(G)=\mathcal{D}(G)+\mathcal{A}(G)$ is the signless Laplacian tensor of $G$~\cite{Qi}. 
Motivated by work of Nikiforov~\cite{Ni1} (see also \cite{GZ,NP}), 
Lin et al.~\cite{LGZ}  proposed to study the convex linear combinations $\mathcal{A}_{\alpha}(G)$
of $\mathcal{D}(G)$ and $\mathcal{A}(G)$ defined by
\[
\mathcal{A}_{\alpha}(G)=\alpha \mathcal{D}(G) +(1-\alpha)\mathcal{A}(G),
\]
where $0\le \alpha<1$. The $\alpha$-spectral radius of $G$ is the spectral radius of $\mathcal{A}_{\alpha}(G)$, denoted by $\rho_{\alpha}(G)$. Note that $\rho_0(G)$ is the spectral radius of $G$, while $2\rho_{1/2}(G)$ is the signless Laplacian spectral radius of $G$.

For $k\ge 2$, let $G$ be a $k$-uniform hypergraph with $V(G)=[n]$, and $x$ a $n$-dimensional column
vector.
Let $x_V=\prod_{v\in V}x_v$ for $V\subseteq V(G)$.
Then
\[
x^{\top}(\mathcal{A}_{\alpha}(G)x)=\alpha\sum_{u\in V(G)}d_ux_u^{k}+(1-\alpha) k\sum_{e\in E(G)}x_e,
\]
or equivalently,
\[x^{\top}(\mathcal{A}_{\alpha}(G)x)=\sum_{e\in E(G)} \left(\alpha\sum_{u\in e}x_u^k+(1-\alpha)kx_e\right).
\]


%
%

For a uniform hypergraph $G$, bounds for  the spectral radius $\rho_0(G)$  have been given in \cite{CD,LMZW,LZM,YZL}, and bounds for the signless Laplacian spectral radius  $2\rho_{1/2}(G)$  may be found in \cite{HQX,LMZW, QSW}. Recently, Lin et al.~\cite{LGZ} gave upper bounds for $\alpha$-spectral radius of connected irregular $k$-uniform hypergraphs, extending some known bounds for ordinary graphs.
Some hypergraph transformations have been proposed to investigate the change of the $0$-spectral radius, and the unique hypergraphs that maximize or  minimize the $0$-spectral radius have been determined among some classes of uniform hypergraphs (especially for hypertrees), see, e.g., \cite{Guo2, FT,LiSQ,Ou,XW,XW2,YSS,ZSB}.

In this paper,
we give upper bounds for the $\alpha$-spectral radius of a uniform hypergraph,
propose some hypergraph transformations that increase the $\alpha$-spectral radius, and determine the unique hypergraphs with maximum $\alpha$-spectral radius in some classes of uniform hypergraphs such as the class of $k$-uniform hypercacti with $m$ edges and $r$ cycles for $0\le r\le \lfloor\frac{m}{2}\rfloor$, and  the class of $k$-uniform hypertrees with $m$ edges and diameter $d\ge 3$.

\section{Preliminaries}

 A tensor $\mathcal{T}$ of order $k\ge 2$ and dimension $n$ is said to be weakly reducible, if there is a nonempty proper subset $J$ of $[n]$ such that for  $i_1\in J$ and $i_j\in [n]\setminus J$ for some $j=2, \dots, k$, $T_{i_1 \dots i_k}=0$. Otherwise,  $\mathcal{T}$  is weakly irreducible.

For $k\ge 2$, an $n$-dimensional vector $x$ is said to be $k$-unit if $\sum_{i=1}^nx_i^k=1$.

\begin{Lemma}\label{irreducibility}\cite{FGH,YY}
Let $\mathcal{T}$ be a  nonnegative tensor of order $k\ge 2$ and diminsion $n$. Then $\rho(\mathcal{T})$ is an eigenvalue of $\mathcal{T}$ and there is a $k$-unit nonnegative eigenvector corresponding to $\rho(\mathcal{T})$.
If furthermore $\mathcal{T}$ is weakly irreducible, then there is
a unique $k$-unit positive eigenvector corresponding to $\rho(\mathcal{T})$.
\end{Lemma}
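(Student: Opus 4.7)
The statement is a nonlinear analogue of the Perron--Frobenius theorem for nonnegative matrices, and the plan is to follow the standard template used by Chang--Pearson--Zhang and by Friedland--Gaubert--Han: produce a nonnegative eigenvector at $\rho(\mathcal{T})$ by a fixed-point argument, then use weak irreducibility to force strict positivity and uniqueness.

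For existence, let $\Omega = \{x \ge 0 : \sum_{i=1}^n x_i^k = 1\}$ and apply Brouwer's fixed-point theorem to the map $x \mapsto (\mathcal{T}x + \varepsilon \mathbf{1})^{[1/(k-1)]}$, renormalized to have unit $k$-norm, on $\Omega$; the perturbation $\varepsilon > 0$ keeps the map continuous on all of $\Omega$. A fixed point $x^{(\varepsilon)}$ satisfies $\mathcal{T}x^{(\varepsilon)} + \varepsilon \mathbf{1} = \lambda^{(\varepsilon)} (x^{(\varepsilon)})^{[k-1]}$; letting $\varepsilon \to 0^{+}$ along a convergent subsequence produces $x \in \Omega$ with $\mathcal{T}x = \lambda x^{[k-1]}$ for some $\lambda \ge 0$. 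That $\lambda = \rho(\mathcal{T})$ is a Collatz--Wielandt-style statement: for an arbitrary eigenpair $(\mu, y)$, applying the equation entrywise and using $\mathcal{T} \ge 0$ gives $|\mu|\,|y|^{[k-1]} \le \mathcal{T}|y|$ componentwise, and pairing this with a positive $x$ satisfying $\mathcal{T}x = \lambda x^{[k-1]}$ yields $|\mu| \le \lambda$.

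For positivity under weak irreducibility, set $J = \{i : x_i = 0\}$; if $J$ were a nonempty proper subset of $[n]$, then for $i \in J$ the identity $(\mathcal{T}x)_i = \lambda x_i^{k-1} = 0$, together with $\mathcal{T} \ge 0$ and $x \ge 0$, would force $\mathcal{T}_{i\,i_2 \dots i_k} = 0$ whenever all of $i_2, \dots, i_k$ lie in $[n] \setminus J$, which is exactly the condition for $[n]\setminus J$ to witness weak reducibility of $\mathcal{T}$, a contradiction. For uniqueness, suppose $y$ is a second $k$-unit positive eigenvector at $\lambda$ and set $t = \min_i x_i/y_i > 0$; then $z = x - ty \ge 0$ has at least one zero coordinate, and a homogeneous Collatz--Wielandt comparison between $(\mathcal{T}x)_i = \lambda x_i^{k-1}$ and $(\mathcal{T}(ty))_i = t^{k-1}\lambda y_i^{k-1}$ propagates the zeros of $z$, reducing to the same weak-reducibility contradiction and forcing $z = 0$; the normalization then gives $t = 1$, hence $x = y$. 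I expect this final uniqueness step to be the main obstacle, because the degree of homogeneity $k-1 > 1$ breaks the linear subtraction trick of the matrix case, so one must replace it with a multiplicative comparison---typically via a Hilbert-projective or logarithmic metric on the positive cone---combined with the combinatorial interpretation of weak irreducibility as strong connectivity of an associated directed graph on $[n]$.
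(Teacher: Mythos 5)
The first thing to note is that the paper contains no proof of this lemma: it is quoted as a known result from the cited references (Friedland--Gaubert--Han and Yang--Yang), so there is no in-paper argument to compare yours against, and you are effectively being asked to reprove the tensor Perron--Frobenius theorem. Your outline follows the standard template, but the positivity step has a genuine gap. From $(\mathcal{T}x)_i=\lambda x_i^{k-1}=0$ for $i\in J=\{i:x_i=0\}$ you may only conclude that every \emph{nonzero} entry $\mathcal{T}_{i i_2\dots i_k}$ with $i\in J$ has \emph{at least one} index $i_j\in J$; weak reducibility witnessed by $J$ requires the much stronger statement that every such nonzero entry has \emph{all} of $i_2,\dots,i_k$ in $J$ (i.e.\ $\mathcal{T}_{i i_2\dots i_k}=0$ as soon as some $i_j\notin J$). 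For $k\ge 3$ these are not the same, and the discrepancy is not cosmetic: take $n=2$, $k=3$, $\mathcal{T}_{112}=\mathcal{T}_{121}=\mathcal{T}_{211}=1$ and all other entries $0$. This tensor is weakly irreducible (its digraph has arcs $1\to 2$ and $2\to 1$), yet $x=(0,1)^{\top}$ satisfies $\mathcal{T}x=0=0\cdot x^{[2]}$, a nonnegative eigenvector with a zero coordinate. So ``nonnegative eigenvector with nonempty zero set contradicts weak irreducibility'' is false as a general principle; your argument nowhere uses that $\lambda$ is the spectral radius, which is where the truth of the lemma actually resides (in the example, $\rho(\mathcal{T})=2^{2/3}>0$ and its eigenvector is positive). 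Relatedly, your Collatz--Wielandt step is circular: it pairs an arbitrary eigenpair with ``a positive $x$,'' but the $x$ produced by your limit need not be positive unless weak irreducibility has already been exploited.

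The same structural problem resurfaces in the uniqueness step, as you partly anticipate: from $x\ge ty$ monotonicity gives $\mathcal{T}x\ge \mathcal{T}(ty)=t^{k-1}\mathcal{T}y$, hence $x_i^{k-1}\ge t^{k-1}y_i^{k-1}$, which is just $x\ge ty$ again --- the comparison is a tautology and does not propagate the zeros of $x-ty$. The proofs in the cited sources avoid both difficulties by passing to the monotone, degree-one-homogeneous map $F(x)=(\mathcal{T}x)^{[1/(k-1)]}$ on the positive cone and invoking nonlinear Perron--Frobenius theory (strong connectivity of the associated digraph, contraction in Hilbert's projective metric): there a \emph{positive} eigenvector is produced directly in the weakly irreducible case, its eigenvalue is identified with $\rho(\mathcal{T})$ afterwards, and uniqueness follows from the metric contraction. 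To repair your outline you would need, at minimum, to replace the support argument by one that uses maximality of $\rho(\mathcal{T})$, and to replace the subtraction trick by a $\min_i x_i/y_i$ versus $\max_i x_i/y_i$ (Hilbert-metric) comparison.
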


If  $G$ is a $k$-uniform hypergraph with $k\ge 2$, then $\mathcal{A}_{\alpha}(G)$  is weakly irreducible if and only if $G$ is connected (see \cite{PZ,Qi} for the treatment of $\mathcal{A}_0(G)$ and $2\mathcal{A}_{1/2}(G)$ respectively). Thus,  if $G$ is connected, then by Lemma~\ref{irreducibility}, there is
a unique  $k$-unit positive $H$-eigenvector $x$ corresponding to  $\rho_{\alpha}(G)$,
 which is called the
 $\alpha$-Perron vector   of $G$.

For  a nonnegative tensor  $\mathcal{T}$ of order $k\ge 2$ and dimension $n$, let $r_i(\mathcal{T})=\sum_{i_2\dots i_k=1}^n \mathcal{T}_{ii_2\dots i_k}$ for $i=1, \dots, n$.

\begin{Lemma}\label{Tensor1} \cite{LCL,YY}
Let $\mathcal{T}$ be a nonnegative tensor of order $k\ge 2$ and dimension $n$. Then
\[
\rho(\mathcal{T})\leq \max_{1\leq i \leq n}r_i(\mathcal{T})\]
with equality when  $\mathcal{T}$ is weakly irreducible if and only if $r_1(\mathcal{T})=\cdots=r_n(\mathcal{T})$.
\end{Lemma}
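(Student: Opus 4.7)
The plan is to apply Lemma~\ref{irreducibility} to produce a nonnegative eigenvector for $\rho(\mathcal{T})$ and then evaluate the eigenequation at a coordinate where the eigenvector is maximal. This is the tensor analogue of the classical Collatz--Wielandt argument. Concretely, I would pick a $k$-unit nonnegative eigenvector $x$ with $\mathcal{T}x = \rho(\mathcal{T})\, x^{[k-1]}$, choose $j$ with $x_j = \max_i x_i$ (which is positive since $x \neq 0$), and bound the $j$-th coordinate of $\mathcal{T}x$ by replacing each product $x_{i_2}\cdots x_{i_k}$ with $x_j^{k-1}$. Dividing through by $x_j^{k-1}>0$ yields $\rho(\mathcal{T}) \leq r_j(\mathcal{T}) \leq \max_i r_i(\mathcal{T})$, which settles the inequality.

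For the ``if'' direction of the equality statement (when $r_1=\cdots=r_n=r$), the all-ones vector $\mathbf{1}$ satisfies $\mathcal{T}\mathbf{1} = r\, \mathbf{1}^{[k-1]}$ directly from the definition of $r_i(\mathcal{T})$, so $r$ is an $H$-eigenvalue of $\mathcal{T}$; combined with the inequality just established, this forces $\rho(\mathcal{T}) = r$.

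For the ``only if'' direction, assume $\mathcal{T}$ is weakly irreducible and $\rho(\mathcal{T}) = \max_i r_i(\mathcal{T})$. Weak irreducibility upgrades Lemma~\ref{irreducibility} to a strictly positive $k$-unit eigenvector $x$. Running the inequality above at $j$ with $x_j$ maximal forces every intermediate step to be tight; in particular, for every $(i_2,\ldots,i_k)$ with $\mathcal{T}_{j i_2 \ldots i_k} > 0$, we must have $x_{i_2} = \cdots = x_{i_k} = x_j$. Set $J = \{i : x_i = x_j\}$. The same maximum-coordinate argument applied at any $i_1 \in J$ shows that $\mathcal{T}_{i_1 i_2 \ldots i_k} = 0$ whenever some $i_s \in [n]\setminus J$. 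If $J$ were a proper nonempty subset of $[n]$, this would directly contradict weak irreducibility. Hence $J=[n]$, the vector $x$ is constant, and substituting back into $\mathcal{T}x = \rho(\mathcal{T})\, x^{[k-1]}$ gives $r_i(\mathcal{T}) = \rho(\mathcal{T})$ for every $i$.

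The main obstacle is the propagation step in the only-if direction: passing from equality at a single maximum coordinate to global constancy of $x$. One must show that the ``forward closure'' property of $J$ derived from tightness is exactly what is negated by weak irreducibility (as defined in the paper), rather than relying on the stronger matrix-style irreducibility. The inequality direction and the ``if'' direction are essentially one-line consequences of Lemma~\ref{irreducibility} and the definition of $r_i(\mathcal{T})$.
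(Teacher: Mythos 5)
Your argument is correct and complete. Note, however, that the paper does not prove this lemma at all: it is quoted verbatim from the cited references \cite{LCL,YY}, so there is no in-paper proof to compare against. What you have written is the standard maximum-coordinate (Collatz--Wielandt) argument, and all three pieces check out: the upper bound follows from evaluating the eigenequation at a maximal coordinate of the nonnegative $k$-unit eigenvector supplied by Lemma~\ref{irreducibility} (with $x_j>0$ because $x$ is $k$-unit); the ``if'' direction follows because the all-ones vector exhibits $r$ as an $H$-eigenvalue, and this direction indeed needs no irreducibility; and in the ``only if'' direction your set $J=\{i: x_i=x_j\}$ has exactly the closure property whose negation is the paper's definition of weak irreducibility (entries $\mathcal{T}_{i_1 i_2\dots i_k}$ with $i_1\in J$ and some $i_s\notin J$ all vanish), so $J=[n]$, $x$ is constant, and every row sum equals $\rho(\mathcal{T})$. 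The one point you flagged as the main obstacle --- matching the tightness-derived closure of $J$ to the paper's notion of weak reducibility rather than matrix-style irreducibility --- is handled correctly, since the paper's definition quantifies precisely over tuples with first index inside $J$ and some later index outside $J$.
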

For two tensors $\mathcal{M}$ and $\mathcal{N}$ of order $k\ge 2$ and dimension $n$, if  there is an $n\times n$ nonsingular diagonal matrix $U$ such that $\mathcal{N}=U^{-(k-1)}\mathcal{M}U$, then we say that $\mathcal{M}$ and $\mathcal{N}$ are  diagonal similar.

\begin{Lemma}\cite{Sh} \label{Tensor2}
Let $\mathcal{M}$ and $\mathcal{N}$ be two diagonal similar tensors of order $k\ge 2$ and dimension $n$. Then $\mathcal{M}$  and $\mathcal{N}$
have the same  real eigenvalues.
\end{Lemma}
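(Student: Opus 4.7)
The plan is to prove both inclusions of real eigenvalue sets by an explicit change of variables driven by the diagonal matrix $U$. Write $U=\mathrm{diag}(u_1,\dots,u_n)$ with all $u_i\neq 0$. The first step is to unpack the tensor product used in the definition of diagonal similarity: using the product formula from the preliminaries, one checks that $\mathcal{N}=U^{-(k-1)}\mathcal{M}U$ is equivalent to the entrywise identity
\[
\mathcal{N}_{i_1 i_2\dots i_k}=u_{i_1}^{-(k-1)}\,\mathcal{M}_{i_1 i_2\dots i_k}\,u_{i_2}u_{i_3}\cdots u_{i_k}.
\]
This reformulation is the only slightly delicate point in the whole argument, since one has to trace carefully how $U$ acts on each of the $k-1$ ``column'' slots and how $U^{-(k-1)}$ collects on the single ``row'' slot.

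Once that entrywise formula is in hand, suppose $\lambda\in\mathbb{R}$ is a real eigenvalue of $\mathcal{M}$ with real eigenvector $x\neq 0$, so that $\mathcal{M}x=\lambda x^{[k-1]}$. I would then set $y=U^{-1}x$, which is real and nonzero since $U$ is real and nonsingular, and compute
\[
(\mathcal{N}y)_i=\sum_{i_2,\dots,i_k} u_i^{-(k-1)}\mathcal{M}_{ii_2\dots i_k}\,u_{i_2}\cdots u_{i_k}\cdot (u_{i_2}^{-1}x_{i_2})\cdots(u_{i_k}^{-1}x_{i_k})=u_i^{-(k-1)}(\mathcal{M}x)_i.
\]
Substituting $(\mathcal{M}x)_i=\lambda x_i^{k-1}$ and using $y_i=u_i^{-1}x_i$ gives $(\mathcal{N}y)_i=\lambda y_i^{k-1}$, so $\mathcal{N}y=\lambda y^{[k-1]}$. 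Hence every real eigenvalue of $\mathcal{M}$ is a real eigenvalue of $\mathcal{N}$.

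For the reverse inclusion I would just observe the symmetry: rewriting the relation as $\mathcal{M}=V^{-(k-1)}\mathcal{N}V$ with $V=U^{-1}$ (again a nonsingular diagonal matrix) puts $\mathcal{N}$ and $\mathcal{M}$ in exactly the same position, so the same calculation, with $x$ and $y$ swapped and $U$ replaced by $V$, shows that every real eigenvalue of $\mathcal{N}$ is a real eigenvalue of $\mathcal{M}$. Combining both directions yields the claim. The step I expect to be the main potential pitfall is the first one, correctly translating the abstract product $U^{-(k-1)}\mathcal{M}U$ into the diagonal rescaling of entries; after that the verification is a one-line computation.
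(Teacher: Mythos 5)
The paper offers no proof of this lemma at all: it is imported verbatim from \cite{Sh}, where it is a special case of a more general theorem that similar tensors (conjugation by an arbitrary nonsingular matrix, not just a diagonal one) have the same characteristic polynomial, proved via properties of the resultant. Your self-contained verification is correct and is the natural elementary argument for the diagonal case: the entrywise identity $\mathcal{N}_{i_1i_2\dots i_k}=u_{i_1}^{-(k-1)}\mathcal{M}_{i_1i_2\dots i_k}u_{i_2}\cdots u_{i_k}$ is exactly what Shao's product yields when $U$ is diagonal, and the change of variables $y=U^{-1}x$ then transports eigenpairs in both directions, with the reverse inclusion following from the symmetry $\mathcal{M}=V^{-(k-1)}\mathcal{N}V$, $V=U^{-1}$. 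Two small points. First, do not assume the eigenvector $x$ is real: a tensor eigenvalue can be a real number even when every associated eigenvector is complex, so as written your argument only covers $H$-eigenvalues; since the computation is purely algebraic it works verbatim for complex $x$ (and complex $U$), and the realness of $y$ is never used, so simply drop those assumptions. Second, your route establishes equality of the \emph{sets} of real eigenvalues, whereas Shao's resultant argument also preserves multiplicities; for the only use made of the lemma in this paper (identifying $\rho(\mathcal{A}_{\alpha}(G))$ with $\rho(\mathcal{T})$ in Theorem~3.1, where both spectral radii are eigenvalues by Lemma~2.1), the set-level statement suffices, so your more elementary proof fully supports the application.
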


Let $G$ be a connected $k$-uniform hypergraph on $n$ vertices, where $k\ge 2$. Let $0\le \alpha<1$.  For an $n$-dimensional $k$-unit nonnegative vector $x$,  by \cite[Theorem~2]{Qi13} (and its proof) and  Lemma~\ref{irreducibility},  we have
$\rho_{\alpha}(G)\ge x^\top (A_{\alpha}(G)x)$ with equality if and only if $x$ is the $\alpha$-Perron vector of $G$. If $x$ is the $\alpha$-Perron vector of $G$, then
 for any $v\in V(G)$,
\[
\rho_{\alpha}(G)x_v^{k-1}
=\alpha d_vx_v^{k-1}+(1-\alpha)\sum_{e\in E_v(G)} x_{e\setminus\{v\}},
\]
which is called the eigenequation of $G$ at $v$.

For a hypergraph $G$ with  $\emptyset\ne X\subseteq V (G)$, let $G[X]$ be the subhypergraph induced by
$X$, i.e., $G[X]$ has vertex set $X$ and edge set $\{e\subseteq X: e\in E(G)\}$.  If $E'\subseteq E(G)$, then $G-E'$ is the hypergraph obtained from $G$ by deleting the edges in $E'$. If $E'$ is set  of subsets of $V(G)$ and no element of $E'$ is an edge of $G$, then $G+E'$ is the hypergraph obtained from $G$ by adding elements of $E'$ as edges.

A $k$-uniform hypertree with $m$ edges is a hyperstar, denoted by $S_{m,k}$, if  all  edges share a common vertex.
A $k$-uniform loose path with $m\ge 1$ edges, denoted by $P_{m,k}$, is the $k$-uniform hypertree whose vertices and edges  may be labelled as
 $(v_0, e_1, v_1,\dots, v_{m-1}, e_m, v_m)$ such that the vertices $v_1,\dots, v_{m-1}$ are of degree $2$, and all the other vertices of $G$ are of degree $1$.

 If $P$ is a path or a cycle of a hypergraph $G$, $V(P)$ denotes the vertex set of the hypergraph $P$.

\section{Upper bounds for $\alpha$-spectral radius}

For  a connected irregular $k$-uniform hypergraph $G$ with $n$ vertices, maximum degree $\Delta$ and diameter $D$, where $2\le k<n$, it was shown in \cite{LGZ} that for $0\le \alpha<1$,
\[
\rho_\alpha(G)<\Delta-\frac{4(1-\alpha)}{\left((4D-1-2\alpha)(k-1)+1\right)n}.
\]
For a $k$-uniform hypergraph $G$, upper bounds on $\rho_0(G)$ and $2\rho_{1/2}(G)$ have been given in \cite{YZL,LMZW}.

\begin{Theorem} \label{Mo} Let $G$ be a $k$-uniform  hypergraph on $n$ vertices with maximum
degree $\Delta$ and second maximum degree $\Delta'$, where $k\ge 2$.

For  $\alpha=0$, let $\delta=\left(\frac{\Delta}{\Delta'}\right)^{\frac{1}{k}}$, and for $0<\alpha<1$,
let $\delta=1$ if $\Delta=\Delta'$ and $\delta$ be a root of $h(t)=0$ in $((\frac{\Delta}{\Delta'})^{\frac{1}{k}}, + \infty)$
if $\Delta>\Delta'$, where $h(t)=(1-\alpha)\Delta't^{k}+\alpha(\Delta'-\Delta)t^{k-1}-(1-\alpha)\Delta$ for $0\le \alpha <1$. Then
\begin{equation} \label{MZh}
\rho_{\alpha}(G)\le \alpha \Delta+(1-\alpha)\Delta\delta^{-(k-1)}.
\end{equation}
 Moreover, if $G$ is connected, then equality holds in $(\ref{MZh})$ if and only if $G$ is a regular hypergraph or  $G\cong G'$, where $V(G')=V(H)\cup \{v\}$, $E(G')=\{e\cup \{v\}: e\in E(H)\}$, and $H$ is
a  regular $(k-1)$-uniform hypergraph on $n-1$ vertices with $v\notin V(H)$.
\end{Theorem}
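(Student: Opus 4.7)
The plan is to use Lemma~\ref{Tensor2} together with the row-sum bound of Lemma~\ref{Tensor1}. Let $v^*$ be a vertex with $d_{v^*}=\Delta$ and let $U=\mathrm{diag}(u_1,\dots,u_n)$ be the positive diagonal matrix defined by $u_{v^*}=\delta$ and $u_v=1$ for every $v\ne v^*$. Set $\mathcal{B}=U^{-(k-1)}\mathcal{A}_{\alpha}(G)U$. Since $\mathcal{A}_\alpha(G)$ is nonnegative, so is $\mathcal{B}$, and by Lemma~\ref{Tensor2} they share the same real eigenvalues, so $\rho(\mathcal{B})=\rho_\alpha(G)$. The role of $\delta$ will be to balance the row sum at $v^*$ against all other row sums.

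A direct computation using the diagonal similarity gives, for any $v\in V(G)$,
\[
r_v(\mathcal{B})=\alpha d_v+(1-\alpha)u_v^{-(k-1)}\sum_{e\in E_G(v)}\prod_{w\in e\setminus\{v\}}u_w.
\]
At $v=v^*$ every factor $u_w$ on the right equals $1$, so $r_{v^*}(\mathcal{B})=\alpha\Delta+(1-\alpha)\Delta\,\delta^{-(k-1)}$. For $v\ne v^*$, writing $d_v^{v^*}$ for the number of edges through $v$ that also contain $v^*$, we get $r_v(\mathcal{B})=d_v+(1-\alpha)(\delta-1)d_v^{v^*}$, and since $d_v^{v^*}\le d_v\le \Delta'$ with $\delta\ge 1$, this is bounded by $\Delta'(\alpha+(1-\alpha)\delta)$. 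The central algebraic step is to verify
\[
\Delta'(\alpha+(1-\alpha)\delta)\le \alpha\Delta+(1-\alpha)\Delta\,\delta^{-(k-1)},
\]
which after clearing $\delta^{k-1}>0$ is exactly $h(\delta)\le 0$. When $\Delta=\Delta'$ and $\delta=1$ the inequality is an equality, and when $\Delta>\Delta'$ the choice of $\delta$ as a root of $h$ in $((\Delta/\Delta')^{1/k},+\infty)$ gives $h(\delta)=0$; a quick sign check at $t=(\Delta/\Delta')^{1/k}$ (where the first and third terms cancel and the middle term is negative) and at $t\to\infty$ ensures existence of such a root for $0<\alpha<1$, while for $\alpha=0$ the root is visibly $(\Delta/\Delta')^{1/k}$. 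Applying Lemma~\ref{Tensor1} to $\mathcal{B}$ then yields $(\ref{MZh})$.

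For the equality characterization, assume $G$ is connected, so $\mathcal{A}_\alpha(G)$, and hence $\mathcal{B}$, is weakly irreducible. Lemma~\ref{Tensor1} forces $r_v(\mathcal{B})=r_{v^*}(\mathcal{B})$ for every $v$. In the case $\Delta=\Delta'$ (so $\delta=1$) this simplifies to $d_v=\Delta$ for all $v$, i.e., $G$ is regular. In the case $\Delta>\Delta'$ (so $\delta>1$), tracing back the inequalities shows that every $v\ne v^*$ must satisfy $d_v=\Delta'$ \emph{and} $d_v^{v^*}=d_v$; the latter says that every edge through $v$ contains $v^*$, hence (since $k\ge 2$) $v^*$ lies in every edge of $G$. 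Deleting $v^*$ from every edge yields a $(k-1)$-uniform hypergraph $H$ on $n-1$ vertices in which every vertex has degree $\Delta'$, i.e., $H$ is regular, and $G$ is precisely the hypergraph $G'$ described in the statement. The converse, that $G\cong G'$ gives equality, is a short calculation that all row sums of $\mathcal{B}$ indeed coincide with $\alpha\Delta+(1-\alpha)\Delta\,\delta^{-(k-1)}$ under the definition of $\delta$.

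The main obstacle I expect is the equality analysis when $0<\alpha<1$ and $\Delta>\Delta'$: one has to exploit \emph{both} pieces of the sharpness condition $d_v=\Delta'$ and $d_v^{v^*}=d_v$ simultaneously, and recognize that the second forces $v^*$ to be a universal vertex of every edge; afterwards the description of $G'$ falls out by simply peeling off $v^*$. The algebra for $h(\delta)\le 0$ is routine once the diagonal-similarity step has been set up correctly, so the technical heart is really the bookkeeping of row sums of $\mathcal{B}$.
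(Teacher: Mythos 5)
Your proposal is correct and follows essentially the same route as the paper's proof: the same one-parameter diagonal similarity that rescales only the maximum-degree vertex, the same row-sum computations reducing the bound to $h(\delta)\le 0$ via Lemma~\ref{Tensor1}, and the same equality analysis forcing $d_v^{v^*}=d_v=\Delta'$ so that $v^*$ lies in every edge and peels off to leave a regular $(k-1)$-uniform hypergraph.
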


\begin{proof}  By Theorem~2.1 and Lemma~2.2 in \cite{Sh}, we may assume that $d_1\ge \dots\ge d_n$. Then $\Delta=d_1$ and $\Delta'=d_2$.

 If $d_{1}=d_{2}$, then $\delta=1$, and by Lemma~\ref{Tensor1}, we have
\[
\rho_{\alpha}(G)\le\max_{1\leq i\leq n}r_{i}(\mathcal{A}_{\alpha}(G))=\max_{1\leq i\leq n}d_{i}=d_{1}= \alpha d_{1}+(1-\alpha)d_{1}\delta^{-(k-1)},
\]
and when $G$ is connected, $\mathcal{A}_{\alpha}(G)$ is weakly irreducible, thus by Lemma~\ref{Tensor1}, equality holds (\ref{MZh}) if and only if $r_1(\mathcal{A}_{\alpha}(G))=\dots =r_n(\mathcal{A}_{\alpha}(G))$, i.e., $G$ is a regular hypergraph.

Suppose in the following   that $d_{1}> d_{2}$. Let $U=\mbox{diag}(t,1,\dots,1)$ be an $n\times n$ diagonal matrix, where $t>1$ is a variable to be determined later. Let $\mathcal{ T}=U^{-(k-1)}\mathcal{A}_{\alpha}(G)U$. By Lemma~\ref{Tensor2}, $\mathcal{A}_{\alpha}(G)$ and $\mathcal{T}$ have the same real eigenvalues. Obviously, both $\mathcal{A}_{\alpha}(G)$ and $\mathcal{T}$ are nonnegative tensors of order $k$ and dimension $n$. By Lemma~\ref{irreducibility}, $\rho(\mathcal{A}_{\alpha}(G))$ is an eigenvalue of $\mathcal{A}_{\alpha}(G)$  and $\rho(\mathcal{T})$ is an eigenvalue of $\mathcal{T}$. Therefore $\rho_{\alpha}(G)=\rho(\mathcal{A}_{\alpha}(G))=\rho(\mathcal{T})$. For $i\in [n]\setminus \{1\}$, let
$d_{1,i}=|\{e: 1,i\in e\in E(G)\}|$. Obviously, $d_{1,i}\le d_i$.
Note that
\begin{eqnarray*}\begin{split}
r_{1}(\mathcal{T})
=&\sum_{i_{2},\ldots,i_{k}\in[n]}\mathcal{T}_{1i_{2}\dots i_{k}}\\
=&\alpha \mathcal{D}_{1\dots 1}+(1-\alpha)\sum_{i_{2},\ldots,i_{k}\in[n]}U_{11}^{-(k-1)}\mathcal{A}_{1i_{2}\ldots i_{k}}U_{i_{2}i_{2}}\cdots U_{i_{k}i_{k}}\\
=&\alpha d_{1}+(1-\alpha)\sum_{i_{2},\ldots,i_{k}\in[n]\setminus\{1\}}\frac{1}{t^{k-1}}\mathcal{A}_{1i_{2}\ldots i_{k}}\\
=&\alpha d_{1}+\frac{(1-\alpha)d_{1}}{t^{k-1}},
\end{split}
\end{eqnarray*}
and for $2\leq i\leq n$,
\begin{eqnarray*}
\begin{split}
r_{i}(\mathcal{T})
=&\sum_{i_{2},\dots,i_{k}\in[n]}\mathcal{T}_{ii_{2}\dots i_{k}}\\
=& \alpha \mathcal{D}_{i\dots i}+(1-\alpha)\sum_{i_{2},\dots,i_{k}\in[n]}U_{ii}^{-(k-1)}\mathcal{A}_{ii_{2}\ldots i_{k}}U_{i_{2}i_{2}}\cdots U_{i_{k}i_{k}}\\
=& \alpha d_{i}+(1-\alpha)\sum_{i_{2},\dots,i_{k}\in[n]\atop 1\in\{i_{2},\ldots,i_{k}\}}U_{ii}^{-(k-1)}\mathcal{A}_{ii_{2}\ldots i_{k}}U_{i_{2}i_{2}}\cdots U_{i_{k}i_{k}}\\
&+(1-\alpha)\sum_{i_{2},\dots,i_{k}\in[n]\atop 1\not\in\{i_{2},\ldots,i_{k}\}}U_{ii}^{-(k-1)}\mathcal{A}_{ii_{2}\ldots i_{k}}U_{i_{2}i_{2}}\cdots U_{i_{k}i_{k}}\\
=& \alpha d_{i}+(1-\alpha)\sum_{i_{2},\dots,i_{k}\in[n]\atop 1\in\{i_{2},\ldots,i_{k}\}}\mathcal{A}_{ii_{2}\ldots i_{k}}t+(1-\alpha)\sum_{i_{2},\dots,i_{k}\in[n]\atop 1\not\in\{i_{2},\ldots,i_{k}\}}\mathcal{A}_{ii_{2}\ldots i_{k}}\\
=& \alpha d_i+(1-\alpha)td_{1,i}+(1-\alpha)(d_{i}-d_{1,i})\\
=&  d_i+(1-\alpha)(t-1)d_{1,i}\\
\le& (1+(1-\alpha)(t-1))d_{i}\\
\le& (1+(1-\alpha)(t-1))d_{2}
\end{split}
\end{eqnarray*}
with equality if and only if $d_{1,i}=d_i=d_2$.

Note that $h((\frac{d_{1}}{d_{2}})^{\frac{1}{k}})=\alpha(d_2-d_1)(\frac{d_{1}}{d_{2}})^{\frac{k-1}{k}}\le 0$ with equality if and only if $\alpha=0$,  and that $h(+\infty)>0$. Thus  $h(t)=0$ does have a root  $\delta$, as required. Let  $t=\delta$. Then $t>1$,
\[
\alpha d_{1}+\frac{(1-\alpha)d_{1}}{t^{k-1}}=(1+(1-\alpha)(t-1))d_{2},
\]
and thus
 for $1\le i\leq n$,
\[
r_{i}(\mathcal{T})\le \alpha d_{1}+(1-\alpha)d_{1}\delta^{-(k-1)}.
\]
Now by Lemma~\ref{Tensor1},
\[
\rho_{\alpha}(G)=\rho(\mathcal{T})\leq \max_{1\leq i\leq n}r_{i}(\mathcal{T})\leq \alpha d_{1}+(1-\alpha)d_{1}\delta^{-(k-1)}.
\]
This proves (\ref{MZh}).

Suppose that $G$ is connected. Then  $\mathcal{A}_{\alpha}$ is weakly irreducible, and so is $\mathcal{T}$.

Suppose that equality holds in (\ref{MZh}). From the above arguments and by Lemma~\ref{Tensor1}, we have $r_1(\mathcal{T})=\cdots=r_n(\mathcal{T})= \alpha d_{1}+(1-\alpha)d_{1}\delta^{-(k-1)}$, and  $d_{1,i}=d_{i}=d_2$ for $i=2, \dots, n$. Then  vertex $1$ is contained in each edge of $G$. Let $H$ be the hypergraph with $V(H)=V(G)\setminus\{1\}=\{2, \dots, n\}$ and $E(H)=\{e\setminus\{1\}: e\in E(G)\}$. Then $H$ is a  regular $(k-1)$-uniform hypergraph on vertices $2, \dots, n$  of degree $d_2$. Therefore $G\cong G'$, where $V(G')=V(H)\cup \{1\}$, $E(G')=\{e\cup \{v\}: e\in E(H)\}$, and $H$ is
a  regular $(k-1)$-uniform hypergraph on vertices $2, \dots, n$  of degree $d_2$.


Conversely,  if $G\cong G'$, where $V(G')=V(H)\cup \{1\}$, $E(G')=\{e\cup \{v\}: e\in E(H)\}$, and $H$ is
a  regular $(k-1)$-uniform hypergraph on vertices $2, \dots, n$  of degree $d_2$, then by the above arguments, we have $r_{i}(\mathcal{T})=\alpha d_{1}+(1-\alpha)d_{1}(\frac{1}{\delta})^{k-1}$ for $1\leq i\leq n$, and thus by Lemma~\ref{Tensor2},
 $\rho(\mathcal{A}_{\alpha}(G))=\rho(\mathcal{T})=\alpha d_{1}+(1-\alpha)d_{1}\delta^{-(k-1)}$, i.e., (\ref{MZh}) is an equality.
\end{proof}

As $\delta\ge (\frac{d_{1}}{d_{2}})^{\frac{1}{k}}$ with equality if and only if $d_1=d_2$, we have by Theorem~\ref{Mo} that $\rho_{\alpha}(G)\le \alpha d_1+(1-\alpha)d_1^{\frac{1}{k}}d_2^{1-\frac{1}{k}}$ with equality if and only if $G$ is regular.

Letting $\alpha=0$ in Theorem~\ref{Mo}, we have $\delta=\left(\frac{d_1}{d_2}\right)^{\frac{1}{k}}$ and thus (\ref{MZh}) becomes $\rho_0(G)\le d_1^{\frac{1}{k}}d_2^{1-\frac{1}{k}}$, see \cite{YZL}. Letting $\alpha=\frac{1}{2}$ in Theorem~\ref{Mo}, $\delta$ is the  root of  $d_2t^k+(d_2-d_1)t^{k-1}-d_1=0$, and  (\ref{MZh}) becomes $2\rho_{1/2}(G)\le d_{1}+d_{1}\delta^{-(k-1)}$, see \cite{LMZW}.

%

Let $G$ be a connected  $k$-uniform hypergraph with $n$ vertices, $m$ edges, maximum degree $\Delta$ and diameter $D$, where $k\ge 2$. For $0\le \alpha<1$, let $\overline{x}$ be the maximum entry of the $\alpha$-Perron vector of $G$.
From \cite{LGZ}, we have
\[
\rho_\alpha(G)\le \Delta-\frac{(1-\alpha)k(n\Delta-km)}{2(n\Delta-km)(k-1)D+(1-\alpha)k}\overline{x}^k,
\]
and if $D=1$ and $k\ge 3$, then
\[
\rho_\alpha(G)\le  \Delta-\frac{(1-\alpha)(n\Delta-km)n}{2(n\Delta-km)(k-1)+(1-\alpha)n}\overline{x}^k.
\]

\begin{Theorem}\label{x_max} Let $G$ be a connected $k$-uniform hypergraph on $n$ vertices with $m$ edges and  maximum degree $\Delta$, where $k\ge 2$.  Let $x$ be the $\alpha$-Perron vector of $G$ with maximum entry $\overline{x}$.
 For $0\le \alpha<1$, we have
 \[
 \rho_{\alpha}(G)\le \alpha \Delta+(1-\alpha)km\overline{x}^k
 \]
\[
\rho_{\alpha}(G)\le \alpha \Delta+(1-\alpha)\left(\sum_{i\in V(G)}d_i^{\frac{k}{k-1}}\right)^{\frac{k-1}{k}}\overline{x}^{k-1}
\]
 with either equality if and only if $G$ is regular.
\end{Theorem}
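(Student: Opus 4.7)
The key starting point is the identity
\[
\rho_{\alpha}(G)=x^{\top}(\mathcal{A}_{\alpha}(G)x)=\alpha\sum_{u\in V(G)}d_{u}x_{u}^{k}+(1-\alpha)k\sum_{e\in E(G)}x_{e},
\]
which holds because $x$ is the $k$-unit positive $\alpha$-Perron vector of $G$ (guaranteed by connectedness and Lemma~\ref{irreducibility}). Since $\sum_{u}x_{u}^{k}=1$ and $d_{u}\le\Delta$, the first summand is bounded by $\alpha\Delta$ in both inequalities. So the work is to control the ``edge term'' $k\sum_{e\in E(G)}x_{e}$ by the geometry of the hypergraph and by $\overline{x}$.

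For the first bound, I would simply use the trivial estimate $x_{e}=\prod_{v\in e}x_{v}\le \overline{x}^{k}$ for each edge $e$. Summing over the $m$ edges gives $k\sum_{e}x_{e}\le km\overline{x}^{k}$, and combining with the bound on the diagonal term yields $\rho_{\alpha}(G)\le\alpha\Delta+(1-\alpha)km\overline{x}^{k}$.

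For the second bound, I would first split each edge contribution across an incident vertex: for any $v\in e$, $x_{e}=x_{v}\cdot x_{e\setminus\{v\}}\le x_{v}\overline{x}^{k-1}$. Using the handshake-type identity
\[
k\sum_{e\in E(G)}x_{e}=\sum_{v\in V(G)}\sum_{e\in E_{G}(v)}x_{e}\le\overline{x}^{k-1}\sum_{v\in V(G)}d_{v}x_{v},
\]
I would then apply Hölder's inequality with exponents $\tfrac{k}{k-1}$ and $k$ to the sum $\sum_{v}d_{v}x_{v}$, using $\sum_{v}x_{v}^{k}=1$, to get
\[
\sum_{v\in V(G)}d_{v}x_{v}\le\Bigl(\sum_{v\in V(G)}d_{v}^{\frac{k}{k-1}}\Bigr)^{\frac{k-1}{k}}.
\]
Plugging back in yields the stated bound.

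For the equality discussion, both displays reduce to the same conclusion. Since $x$ is positive, equality in $\sum_{u}d_{u}x_{u}^{k}\le\Delta$ forces $d_{u}=\Delta$ for all $u$, i.e., $G$ is regular. Conversely, if $G$ is regular then by Lemma~\ref{Tensor1} $\rho_{\alpha}(G)=\Delta$ and $x$ is the constant vector with $\overline{x}^{k}=1/n$; using $n\Delta=km$ in the first bound and $\sum_{v}d_{v}^{k/(k-1)}=n\Delta^{k/(k-1)}$ in the second, a direct substitution verifies equality in each. The only delicate point, and the place I would be most careful, is to check that regularity alone is already sufficient for equality in the second bound (Hölder's equality condition $d_{v}^{k/(k-1)}\propto x_{v}^{k}$ and the edge-term condition $x_{u}=\overline{x}$ for every $u$ in every edge are automatic once $x$ is constant), so no stronger structural assumption is hidden in the equality case.
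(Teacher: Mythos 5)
Your argument is correct and reaches both bounds, but it travels a different road from the paper's. The paper never forms the global Rayleigh quotient: it starts from the eigenequation at a single vertex $i$, obtains the pointwise inequality $(\rho_{\alpha}-\alpha\Delta)x_i^{k-1}\le(1-\alpha)d_i\overline{x}^{k-1}$, multiplies by $x_i$ and sums over $i$ (using $\sum_i x_i^k=1$ and $\sum_i d_i=km$) to get the first bound, and for the second bound simply raises the same pointwise inequality to the power $\frac{k}{k-1}$ before summing --- no H\"older inequality is needed. You instead decompose $x^{\top}(\mathcal{A}_{\alpha}(G)x)$ into a diagonal term and an edge term, bound the edge term via the handshake identity $k\sum_{e\in E(G)}x_e=\sum_{v}\sum_{e\ni v}x_e\le\overline{x}^{k-1}\sum_{v}d_vx_v$, and then invoke H\"older with exponents $\frac{k}{k-1}$ and $k$. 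Both routes are elementary and yield identical bounds; the paper's power trick is slightly slicker in that the equality analysis for the second bound reduces verbatim to that of the first, while your route makes the roles of $\sum_v d_vx_v$ and of the normalization $\sum_v x_v^k=1$ more transparent.

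One point needs repair in your equality discussion. You derive the necessity of regularity from equality in $\sum_{u}d_{u}x_{u}^{k}\le\Delta$, but that term carries the factor $\alpha$: when $\alpha=0$ it reads $0=0$ regardless of the degrees, so equality in the overall bound does not force $d_u=\Delta$ through it. For $\alpha=0$ you must instead extract regularity from the edge term: equality there forces $x_v=\overline{x}$ for every vertex of every edge, hence by connectedness $x$ is constant, and the eigenequation then gives $d_v=\rho_{0}(G)$ for all $v$. This is essentially the paper's formulation (``all entries of $x$ are equal, or equivalently, $G$ is regular''), and the same observation disposes of the H\"older equality condition you flagged. With that patch the proposal is complete.
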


\begin{proof} From the eigenequation of $G$ at $i\in V(G)$, we have
\[
(\rho_{\alpha}-\alpha \Delta)x_i^{k-1}\le (\rho_{\alpha}-\alpha d_i)x_i^{k-1}=(1-\alpha)\sum_{e\in E_i(G)}\prod_{v\in e\setminus\{i\}}x_v\le (1-\alpha)d_i \overline{x}^{k-1}
\]
with equality if and only if for $v\in e\setminus\{i\}$ with $e\in E_i(G)$, $x_v=\overline{x}$.
Then
\[
(\rho_{\alpha}-\alpha \Delta)x_i^{k}\le  (1-\alpha)d_i \overline{x}^{k},\]
and thus
\[
\rho_{\alpha}-\alpha \Delta\le (1-\alpha)\overline{x}^{k}\sum_{i\in V(G)}d_i =(1-\alpha)km\overline{x}^k
\]
with equality if and only if all entries of $x$ are equal, or equivalently, $G$ is regular.

On the other hand, we have
\[(\rho_{\alpha}-\alpha \Delta)^{\frac{k}{k-1}}x_i^{k}\le (1-\alpha)^{\frac{k}{k-1}}d_i^{\frac{k}{k-1}} \overline{x}^{k},
\]
and thus
\[
(\rho_{\alpha}-\alpha \Delta)^{\frac{k}{k-1}}\le (1-\alpha)^{\frac{k}{k-1}}\overline{x}^{k}\sum_{i\in V(G)}d_i^{\frac{k}{k-1}},
\]
implying that
\[
\rho_{\alpha}(G)\le \alpha \Delta+(1-\alpha)\left(\sum_{i\in V(G)}d_i^{\frac{k}{k-1}}\right)^{\frac{k-1}{k}}\overline{x}^{k-1}
\]
with equality if and only if $G$ is regular.
\end{proof}

Let $\alpha=0$ in Theorem~\ref{x_max}, we have $\overline{x}\ge \frac{\rho_{0}^{\frac{1}{k-1}}}{\left(\sum_{i\in V(G)}d_i^{\frac{k}{k-1}}\right)^{\frac{1}{k}}}$, which has been reported in \cite{LZB}.

\section{Transformations increasing $\alpha$-spectral radius}

In the following, we propose several types of hypergraph transformations that increase the $\alpha$-spectral radius.

\begin{Theorem}\label{moving} For $k\ge 2$, let  $G$ be a $k$-uniform hypergraph with $u, v_1, \dots, v_r\in V(G)$ and $e_1,\dots, e_r\in E(G)$ for $r\ge 1$ such that $u\notin e_i$ and $v_i\in e_i$ for $i=1,\dots, r$, where $v_1, \dots, v_r$ are not necessarily distinct. Let  $e'_i=(e_i\setminus\{v_i\})\cup\{u\}$ for $i=1,\dots, r$. Suppose that $e_i'\not \in E(G)$ for $i=1, \dots, r$. Let $G'=G-\{e_1,\dots, e_r \}+\{e'_1,\dots, e'_r\}$. Let $x$  the $\alpha$-Perron vector  of $G$. If $x_u\ge \max\{x_{v_1}, \dots, x_{v_r}\}$, then $\rho_{\alpha}(G')>\rho_{\alpha}(G)$ for $0\le \alpha<1$.
\end{Theorem}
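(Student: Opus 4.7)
The plan is to invoke the Rayleigh-type bound $\rho_\alpha(H)\ge y^\top(\mathcal{A}_\alpha(H)y)$ for any $k$-unit nonnegative vector $y$ (recalled in the paragraph preceding Theorem~\ref{Mo}), applied with $y=x$ and $H=G'$, and to show that the right-hand side is already at least $\rho_\alpha(G)$.

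First I would exploit the identity
\[
y^\top(\mathcal{A}_\alpha(H)y) = \alpha \sum_{w\in V(H)} d_H(w)\,y_w^k + (1-\alpha)k\sum_{e\in E(H)}y_e
\]
recorded in the excerpt. In passing from $G$ to $G'$, only the degrees of $u$ and of the $v_i$'s change ($d_{G'}(u)=d_G(u)+r$, while $d_{G'}(v)$ drops by $|\{i:v_i=v\}|$ for $v\ne u$), and the edge sum changes only on the $r$ swapped edges, where $x_{e_i'}=x_u\cdot x_{e_i\setminus\{v_i\}}$ and $x_{e_i}=x_{v_i}\cdot x_{e_i\setminus\{v_i\}}$. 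Collecting terms gives
\[
x^\top(\mathcal{A}_\alpha(G')x)-x^\top(\mathcal{A}_\alpha(G)x) = \alpha\sum_{i=1}^{r}\bigl(x_u^{k}-x_{v_i}^{k}\bigr) + (1-\alpha)k\sum_{i=1}^{r}(x_u-x_{v_i})\,x_{e_i\setminus\{v_i\}}.
\]
Since $x_u\ge x_{v_i}$ by hypothesis and $x$ is positive (as $G$ is connected and $x$ is its $\alpha$-Perron vector by Lemma~\ref{irreducibility}), both sums are nonnegative, which combined with the Rayleigh bound produces $\rho_\alpha(G')\ge x^\top(\mathcal{A}_\alpha(G')x)\ge x^\top(\mathcal{A}_\alpha(G)x)=\rho_\alpha(G)$.

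The main obstacle is upgrading this to strict inequality. If $x_u>x_{v_j}$ for some $j$, the strict positivity of $(x_u-x_{v_j})\,x_{e_j\setminus\{v_j\}}$ already forces $x^\top(\mathcal{A}_\alpha(G')x)>\rho_\alpha(G)$, hence $\rho_\alpha(G')>\rho_\alpha(G)$ directly. The delicate case is $x_u=x_{v_1}=\cdots=x_{v_r}$, where the Rayleigh comparison collapses to equality. Here I would argue by contradiction: if $\rho_\alpha(G')=\rho_\alpha(G)$, then the positive vector $x$ attains the Rayleigh maximum on $G'$ and so must satisfy $\mathcal{A}_\alpha(G')x=\rho_\alpha(G')\,x^{[k-1]}$. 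Writing the eigenequation at $u$ in both $G$ and $G'$ and subtracting, and using $d_{G'}(u)=d_G(u)+r$, $E_{G'}(u)=E_G(u)\cup\{e_1',\dots,e_r'\}$, together with $x_{e_i'\setminus\{u\}}=x_{e_i\setminus\{v_i\}}$, yields
\[
0 = \alpha r\,x_u^{k-1} + (1-\alpha)\sum_{i=1}^{r}x_{e_i\setminus\{v_i\}},
\]
which is impossible since $x>0$ and $1-\alpha>0$ make the right-hand side strictly positive. This contradiction completes the argument and delivers $\rho_\alpha(G')>\rho_\alpha(G)$.
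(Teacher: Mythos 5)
Your proposal is correct and follows essentially the same route as the paper: the Rayleigh-quotient comparison $\rho_\alpha(G')\ge x^\top(\mathcal{A}_\alpha(G')x)\ge x^\top(\mathcal{A}_\alpha(G)x)=\rho_\alpha(G)$ with the same term-by-term computation of the difference, followed by the same contradiction via the eigenequations at $u$ (the paper runs that contradiction in all cases rather than only in the all-equal case, but the substance is identical).
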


\begin{proof} Note that  $\rho_{\alpha}(G)=x^{\top}(\mathcal{A}_{\alpha}(G)x)$ and $\rho_{\alpha}(G')\ge x^{\top}(\mathcal{A}_{\alpha}(G')x)$ with equality if and only if $x$  is also the $\alpha$-Perron vector  of $G'$.
Thus
\begin{eqnarray*}\begin{split}
\rho_{\alpha}(G')-\rho_{\alpha}(G)
\ge & x^{\top}(\mathcal{A}_{\alpha}(G')x)-x^{\top}(\mathcal{A}_{\alpha}(G)x)\\
=&  \alpha \left(rx_u^{k}-\sum_{ i=1}^rx_{v_{i}}^{k}\right)+(1-\alpha)k\sum_{i=1}^r(x_{u}-x_{v_{i}})x_{e_{i}\setminus\{v_i\}}\\
\ge&0,
\end{split}
\end{eqnarray*}
and thus $\rho_{\alpha}(G')\ge \rho_{\alpha}(G)$.
Suppose that  $\rho_{\alpha}(G')=\rho_{\alpha}(G)$. Then $\rho_{\alpha}(G')= x^{\top}(\mathcal{A}_{\alpha}(G')x)$, and thus $x$ is the $\alpha$-Perron vector  of $G'$. From the eigenequations of $G'$ and $G$ at $u$ and noting that $E_u(G')=E_u(G)\cup \{e_1', \dots, e_r'\}$, we have
\begin{eqnarray*}\begin{split}
\rho_{\alpha}(G')x_{u}^{k-1}=& \alpha (d_u+r)x_u^{k-1}+(1-\alpha)\sum_{e\in E_u(G')}x_{e\setminus \{u\}}\\
>&\alpha d_ux_u^{k-1}+(1-\alpha)\sum_{e\in E_u(G)}x_{e\setminus \{u\}}\\
=& \rho_{\alpha}(G)x_{u}^{k-1},
\end{split}
\end{eqnarray*}
a contradiction. It follows that $\rho_{\alpha}(G')>\rho_{\alpha}(G)$.
\end{proof}

We say that the hypergraph $G'$ in Theorem~\ref{moving} is obtained from $G$ by moving edges $e_1, \dots, e_r$ from $v_1,\dots, v_r$ to $u$. Theorem~\ref{moving} has been established in \cite{LSQ} for $\alpha=0,\frac{1}{2}$.

\begin{Theorem}\label{switching} Let $G$ be a connected $k$-uniform hypergraph with $k\ge 2$, and $e$ and $f$ be two edges of $G$ with $e\cap f=\emptyset$. Let $x$ be the $\alpha$-Perron vector  of $G$. Let $U\subset e$ and $V\subset f$ with $1\le |U|=|V|\le k-1$.
Let $e'=U\cup (f\setminus V)$ and $f'=V\cup (e\setminus U)$. Suppose that $e', f'\notin E(G)$. Let $G'=G-\{e, f\}+\{e', f'\}$.
If $x_{U}\ge x_{V}$, $x_{e\setminus U}\le x_{f\setminus V}$ and one is strict, then $\rho_{\alpha}(G)< \rho_{\alpha}(G')$ for $0\le \alpha<1$.
\end{Theorem}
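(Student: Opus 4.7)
The overall plan is to follow the two-step strategy of Theorem~\ref{moving}: first use the variational inequality $\rho_{\alpha}(G')\ge x^{\top}(\mathcal{A}_{\alpha}(G')x)$ to obtain $\rho_{\alpha}(G')\ge\rho_{\alpha}(G)$, and then rule out equality by comparing the eigenequations of $G$ and $G'$ at a carefully chosen vertex. The key structural observation is that $e\cup f=e'\cup f'$ as sets: a vertex of $U$ stays in one edge ($e$ becomes $e'$), a vertex of $V$ stays in one edge ($f$ becomes $f'$), a vertex of $e\setminus U$ switches its membership from $e$ to $f'$, and a vertex of $f\setminus V$ switches from $f$ to $e'$. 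Hence $d_w(G')=d_w(G)$ for every vertex $w$, and the diagonal contribution $\alpha\sum_u d_u x_u^k$ to the quadratic form is unaffected by the switching.

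Using $x_e=x_U x_{e\setminus U}$, $x_f=x_V x_{f\setminus V}$, $x_{e'}=x_U x_{f\setminus V}$, $x_{f'}=x_V x_{e\setminus U}$ and factoring, the edge contributions yield
\[
x^{\top}(\mathcal{A}_{\alpha}(G')x)-x^{\top}(\mathcal{A}_{\alpha}(G)x)=(1-\alpha)k(x_U-x_V)(x_{f\setminus V}-x_{e\setminus U})\ge 0
\]
under the hypotheses. Combined with $\rho_{\alpha}(G)=x^{\top}(\mathcal{A}_{\alpha}(G)x)$ and $\rho_{\alpha}(G')\ge x^{\top}(\mathcal{A}_{\alpha}(G')x)$, this gives $\rho_{\alpha}(G')\ge\rho_{\alpha}(G)$, with strict inequality already established whenever both given inequalities are strict.

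The main obstacle is strictness when exactly one of the two inequalities is strict, because then the displayed product vanishes. The plan is to suppose $\rho_{\alpha}(G')=\rho_{\alpha}(G)$ for contradiction; this forces $x$ to be the $\alpha$-Perron vector of $G'$ as well. Since the degrees coincide, the eigenequations of $G$ and $G'$ at a vertex $w$ agree if and only if $\sum_{h\in E_w(G')}x_{h\setminus\{w\}}=\sum_{h\in E_w(G)}x_{h\setminus\{w\}}$, so I will engineer a discrepancy by choosing $w$ suitably. If $x_U=x_V$ while $x_{e\setminus U}<x_{f\setminus V}$, pick any $w\in U$ (which exists since $|U|\ge 1$); the only change in $E_w$ is $e$ replaced by $e'$, and the difference of the two sides equals $x_{U\setminus\{w\}}(x_{f\setminus V}-x_{e\setminus U})>0$, contradicting $\rho_{\alpha}(G')x_w^{k-1}=\rho_{\alpha}(G)x_w^{k-1}$. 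In the complementary case $x_U>x_V$ with $x_{e\setminus U}=x_{f\setminus V}$, pick any $w\in e\setminus U$ (which exists since $|U|\le k-1$); the only change in $E_w$ is $e$ replaced by $f'$, and the analogous difference $x_{(e\setminus U)\setminus\{w\}}(x_V-x_U)<0$ yields the same contradiction. This completes the argument.
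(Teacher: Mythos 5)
Your proof is correct and follows essentially the same route as the paper: the variational inequality gives $\rho_{\alpha}(G')-\rho_{\alpha}(G)\ge(1-\alpha)k(x_U-x_V)(x_{f\setminus V}-x_{e\setminus U})\ge 0$, and equality is ruled out by comparing the eigenequations of $G$ and $G'$ at a suitable vertex. The only difference is cosmetic: the paper reduces to the case $x_{e\setminus U}<x_{f\setminus V}$ by a ``without loss of generality'' (justified by the relabeling $(e,U,f,V)\mapsto(f,f\setminus V,e,e\setminus U)$, which leaves $G'$ unchanged) and works at a vertex of $U$, whereas you treat the remaining case $x_U>x_V$, $x_{e\setminus U}=x_{f\setminus V}$ explicitly at a vertex $w\in e\setminus U$, which is if anything slightly more complete.
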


\begin{proof} 
Note that
\begin{eqnarray*}\begin{split}
\rho_{\alpha}(G')-\rho_{\alpha}(G)
\ge& x^{\top}(\mathcal{A}_{\alpha}(G')x)-x^{\top}(\mathcal{A}_{\alpha}(G)x)\\
=& (1-\alpha) k\sum_{e\in E(G')}x_e-(1-\alpha) k\sum_{e\in E(G)}x_e\\
=& (1-\alpha) k(x_{U}x_{f\setminus V}+x_{V}x_{e\setminus U}-x_{U}x_{e\setminus U}-x_{V}x_{f\setminus V})\\
=& (1-\alpha)k(x_{U}-x_{V})(x_{f\setminus V}-x_{e\setminus U})\\
\ge& 0.
\end{split}
\end{eqnarray*}
Thus $\rho_{\alpha}(G')\ge \rho_{\alpha}(G)$.
Suppose that $\rho_{\alpha}(G')=\rho_{\alpha}(G)$. Then $\rho_{\alpha}(G')=x^{\top}(\mathcal{A}_{\alpha}(G')x)$
and thus $x$ is the $\alpha$-Perron vector  of $G'$. Suppose without loss of generality that $x_{e\setminus U}<x_{f\setminus V}$. Then for $u\in U$
\[
-x_{e\setminus \{u\}}+x_{e'\setminus \{u\}}=-x_{U\setminus \{u\}}\left(x_{e\setminus U}-x_{f\setminus V}\right)>0.
\]
From the eigenequations of $G'$ and $G$  at a vertex $u\in U$, we have
\begin{eqnarray*}\begin{split}
\rho_{\alpha}(G')x_{u}^{k-1}
=& \alpha d_ux_u^{k-1}+(1-\alpha)\sum_{e\in E_u(G')}x_{e\setminus \{u\}}\\
=& \alpha d_ux_u^{k-1}+(1-\alpha)\left(\sum_{e\in E_u(G)}x_{e\setminus \{u\}}-x_{e\setminus \{u\}}+x_{e'\setminus \{u\}}\right)\\
>&\alpha d_ux_u^{k-1}+(1-\alpha)\sum_{e\in E_u(G)}x_{e\setminus \{u\}}\\
=& \rho_{\alpha}(G)x_{u}^{k-1},
\end{split}
\end{eqnarray*}
a contradiction. It follows that $\rho_{\alpha}(G')>\rho_{\alpha}(G)$.
\end{proof}

The above result has been known for $k=2$ in \cite{GZ}  and $\alpha=0$ \cite{XW2}.

A path $P=(v_0,e_1,v_1,\dots,v_{s-1},e_s, v_s)$ in a $k$-uniform hypergraph $G$ is called a pendant path at $v_0$, if $d_G(v_0)\ge 2$,  $d_G(v_i)=2$ for $1\leq i\leq s-1$, $d_G(v)=1$ for $v\in e_i\setminus\{v_{i-1}, v_i\}$ with $1\leq i\leq s$, and $d_G(v_s)=1$. If $s=1$, then  we call $P$ or $e_1$ a pendant edge of $G$ (at $v_0$). A pendant path of length $0$ at $v_0$ is understood as the trivial path consisting of a single vertex $v_0$.

If $P$ is a pendant path at $u$ in a $k$-uniform hypergraph $G$, we say $G$ is obtained from $H$ by attaching a pendant path  $P$ at $u$ with $H=G[V(G)\setminus(V(P)\setminus\{u\})]$. 
In this case, we write $G=H_u(s)$ if the length of $P$ is $s$. Let $H(u,0)=H$.

For a $k$-uniform hypergraph $G$ with $u\in V(G)$, and $p\ge q\ge 0$, let $G_u(p,q)=(G_u(p))_u(q)$.

\begin{Theorem}\label{pq}
For $k\ge 2$, let $G$ be a connected $k$-uniform hypergraph with $|E(G)|\geq1$ and $u\in V(G)$. For  $p\geq q\geq 1$ and $0\le \alpha<1$, we have $\rho_{\alpha}(G_{u}(p, q))> \rho_{\alpha}(G_{u}(p+1, q-1))$.
\end{Theorem}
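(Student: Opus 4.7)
The plan is to realize the passage from $G^{\flat}:=G_u(p+1,q-1)$ to $G_u(p,q)$ as a single Perron-vector-controlled modification, invoking Theorem~\ref{moving} when $q=1$ and Theorem~\ref{switching} when $q\ge 2$. Throughout, let $y$ denote the $\alpha$-Perron vector of $G^{\flat}$ and label the long pendant path at $u$ by $(u=u_0,e_1,u_1,\dots,e_{p+1},u_{p+1})$ and, for $q\ge 2$, the short pendant path by $(u=w_0,g_1,w_1,\dots,g_{q-1},w_{q-1})$. A preliminary fact, needed in both cases, is the strict monotonicity $y_{u_0}>y_{u_1}>\cdots>y_{u_{p+1}}$ (and likewise along the short path), which I would first record as a lemma: solving the eigenequation at any degree-one ``extra'' vertex $v$ lying in an edge whose two path-vertices are $x,x'$ yields the closed form $y_v=\sqrt{(1-\alpha)/(\rho_{\alpha}-\alpha)}\sqrt{y_x y_{x'}}$, after which the eigenequation at the pendant endpoint $u_{p+1}$ simplifies to $y_{u_{p+1}}=\tfrac{1-\alpha}{\rho_{\alpha}-\alpha}y_{u_p}<y_{u_p}$, and the strict decrease propagates inward.

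In the base case $q=1$, I would apply Theorem~\ref{moving} with $r=1$, moving the terminal long-path edge $e_{p+1}$ off $u_p$ and onto $u$. The new edge $(e_{p+1}\setminus\{u_p\})\cup\{u\}$ is not present in $G^{\flat}$ because $u_{p+1}$ has degree one there, and after the move the resulting hypergraph coincides with $G_u(p,1)$; the required hypothesis $y_u\ge y_{u_p}$ is immediate from the monotonicity preliminary, and Theorem~\ref{moving} then yields $\rho_{\alpha}(G_u(p,1))>\rho_{\alpha}(G^{\flat})$.

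In the case $q\ge 2$, I would apply Theorem~\ref{switching} to the disjoint edges $e:=e_q$ and $f:=g_{q-1}$ with the ``outer-half'' subsets $U:=e\setminus\{u_{q-1}\}$ and $V:=f\setminus\{w_{q-2}\}$ (both of size $k-1$, which collapse to $\{u_q\}$ and $\{w_{q-1}\}$ when $k=2$). Tracing connected components, the switched edges $e'=\{u_q,w_{q-2}\}\cup(e\setminus\{u_{q-1},u_q\})$ and $f'=\{w_{q-1},u_{q-1}\}\cup(f\setminus\{w_{q-2},w_{q-1}\})$ reconfigure $G^{\flat}$ so that $(u,u_1,\dots,u_{q-1},w_{q-1})$ forms a pendant path of length $q$ (via $e_1,\dots,e_{q-1},f'$) and $(u,w_1,\dots,w_{q-2},u_q,\dots,u_{p+1})$ forms one of length $p$ (via $g_1,\dots,g_{q-2},e',e_{q+1},\dots,e_{p+1}$); up to relabelling, the new hypergraph is isomorphic to $G_u(p,q)$, and $e',f'\notin E(G^{\flat})$ because the extras of $e_q$ and $g_{q-1}$ each lie in a single edge.

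The hard part will be verifying the two Perron hypotheses of Theorem~\ref{switching}, namely $y_U\ge y_V$ and $y_{e\setminus U}\le y_{f\setminus V}$, at least one strict. Using the closed form for the extras, $y_U$ and $y_V$ become constant multiples of $y_{u_{q-1}}^{(k-2)/2}y_{u_q}^{k/2}$ and $y_{w_{q-2}}^{(k-2)/2}y_{w_{q-1}}^{k/2}$ respectively, while $y_{e\setminus U}=y_{u_{q-1}}$ and $y_{f\setminus V}=y_{w_{q-2}}$, so both hypotheses reduce to inequalities purely among the four path-vertex entries $y_{u_{q-1}},y_{u_q},y_{w_{q-2}},y_{w_{q-1}}$. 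The crucial comparisons are the off-by-one depth inequalities $y_{u_{q-1}}<y_{w_{q-2}}$ and $y_{u_q}>y_{w_{q-1}}$: these are \emph{not} same-depth comparisons, and I would establish them from the second-order linear recurrence that the sequences $(y_{u_j})$ and $(y_{w_j})$ both satisfy on their respective pendant paths. The two sequences share the initial value $y_u$ but have pendant-endpoint boundary conditions at depths $p+1$ and $q-1$; a shorter pendant path forces a sharper decay at its tip than the interior depths of a longer one enjoy, and together with the monotonicity preliminary this yields both inequalities and, after reinstating the extras, the product-form version required for general $k$. Making these off-by-one comparisons precise and bookkeeping the extras when $k\ge 3$ is the main technical hurdle.
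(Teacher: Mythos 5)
Your base case $q=1$ and the monotonicity of Perron entries along a single pendant path (granting the propagation step you only sketch) are fine. The fatal problem is the case $q\ge 2$: the two unconditional off-by-one comparisons $y_{u_{q-1}}<y_{w_{q-2}}$ and $y_{u_q}>y_{w_{q-1}}$ on which your single switch rests are false in general, so what you call the ``main technical hurdle'' is not a hurdle but a step that cannot be carried out. Concretely, take $k=2$, $\alpha=0$, $p=q=2$, and let $G$ be a star with center $u$ and ten leaves, so that your $G^{\flat}=G_u(3,1)$ is the star together with a pendant path $u-u_1-u_2-u_3$ and a pendant edge $u-w_1$. Then $\rho_0(G^{\flat})=\rho\approx 3.48$, and the ratio recursion $y_{u_j}/y_{u_{j-1}}=1/\bigl(\rho-y_{u_{j+1}}/y_{u_j}\bigr)$ down the pendant path gives $y_{u_1}\approx 0.316\,y_u$ and $y_{u_2}\approx 0.100\,y_u$, while $y_{w_1}=y_u/\rho\approx 0.287\,y_u$. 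Hence $y_U=y_{u_2}<y_{w_1}=y_V$, the hypothesis $y_U\ge y_V$ of Theorem~\ref{switching} fails, and since also $y_{e\setminus U}=y_{u_1}<y_u=y_{f\setminus V}$, the product $(y_U-y_V)(y_{f\setminus V}-y_{e\setminus U})$ is strictly negative: the switch cannot be rescued by exchanging the roles of $e$ and $f$. The heuristic ``a shorter path decays faster at its tip'' only yields the termwise ratio comparison $r_{j+1}>s_j$; the leftover factor $r_1<1$ in $\prod_{j\le q}r_j$ versus $\prod_{j\le q-1}s_j$ is exactly what destroys the product inequality once $u$ carries enough weight.

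This is also where your route diverges essentially from the paper's. The paper argues by contradiction: assuming $\rho_{\alpha}(G_u(p,q))<\rho_{\alpha}(G_u(p+1,q-1))$, it shows that whenever one of its comparisons (which are taken with offset $p-q+1$, not $1$) fails, an application of Theorem~\ref{moving} or Theorem~\ref{switching} already transforms $G_u(p+1,q-1)$ into a copy of $G_u(p,q)$ with strictly larger $\alpha$-spectral radius, contradicting the assumption; it therefore never needs any comparison to hold unconditionally for the true Perron vector. Your plan must verify the switching hypotheses for the actual Perron vector of $G_u(p+1,q-1)$, and the example above shows they can fail. To repair the argument you would have to either adopt the paper's contradiction scheme or prove that among the several candidate switch locations at least one is always admissible; neither is in the proposal.
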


\begin{proof}
Let $(u, e_1, u_1,\dots, u_{p}, e_{p+1}, u_{p+1})$ and $(u, f_1, v_1,\dots, v_{q-2}, f_{q-1}, v_{q-1})$ be the pendant paths of $G_u(p+1, q-1)$ at $u$ of lengths $p+1$ and $q-1$, respectively. Let $v_0=u$.  Let $x$ be the $\alpha$-Perron vector  of $G_u(p+1, q-1)$.

Suppose that $\rho_{\alpha}(G_{u}(p, q))<\rho_{\alpha}(G_{u}(p+1, q-1))$. We prove that $x_{u_{p-i}}>x_{v_{q-i-1}}$ for $i=0,\dots, q-1$.

Suppose that $x_{v_{q-1}}\ge x_{u_p}$.  Let $H$ be the $k$-uniform hypergraph obtained from $G_{u}(p+1, q-1)$ by moving $e_{p+1}$ from $u_{p}$ to $v_{q-1}$. By Theorem~\ref{moving} and noting that $H\cong G_u(p, q)$, we have $\rho_{\alpha}(G_{u}(p, q))=\rho_{\alpha}(H)> \rho_{\alpha}(G_{u}(p+1, q-1))$, a contradiction.   Thus
 $x_{u_p}>x_{v_{q-1}}$.

Suppose that $q\ge 2$ and  $x_{u_{p-i}}>x_{v_{q-i-1}}$,  where  $0\le i\le q-2$. We want to show that $x_{u_{p-(i+1)}}> x_{v_{q-(i+1)-1}}$. Suppose that this is not true, i.e.,
$x_{v_{q-i-2}}\ge x_{u_{p-i-1}}$. Suppose that $x_{e_{p-i}\setminus\{u_{p-i-1},u_{p-i}\}}\le  x_{f_{q-i-1}\setminus\{v_{q-i-2},v_{q-i-1}\}}$. Then $x_{e_{p-i}\setminus\{u_{p-i}\}}\le  x_{f_{q-i-1}\setminus\{v_{q-i-1}\}}$. Let $H'=G_u(p+1, q-1)-\{e_{p-i}, f_{q-i-1}\}+\{e', f'\}$, where $e'=\{u_{p-i}\}\cup (f_{q-i-1}\setminus \{v_{q-i-1}\})$ and $f'=\{v_{q-i-1}\}\cup (e_{p-i}\setminus\{u_{p-i}\})$. Obviously, $H'\cong G_{u}(p, q)$. By Theorem~\ref{switching}, we have $\rho_{\alpha}(G_{u}(p, q))=\rho_{\alpha}(H')> \rho_{\alpha}(G_{u}(p+1, q-1))$, a contradiction.
Thus  $x_{e_{p-i}\setminus\{u_{p-i-1},u_{p-i}\}}>x_{f_{q-i-1}\setminus\{v_{q-i-2},v_{q-i-1}\}}$, and then  $x_{e_{p-i}\setminus\{u_{p-i-1}\}}> x_{f_{q-i-1}\setminus\{v_{q-i-2}\}}$.
Let $H''=G_u(p+1, q-1)-\{e_{p-i}, f_{q-i-1}\}+\{e'', f''\}$, where $e''=(e_{p-i}\setminus \{u_{p-i-1})\cup \{v_{q-i-2}\}$ and $f''=f_{q-i-1}\setminus\{v_{q-i-2}\})\cup \{u_{p-i-1}\}$. Obviously, $H''\cong G_{u}(p, q)$. By Theorem~\ref{switching}, we have  $\rho_{\alpha}(G_{u}(p, q))=\rho_{\alpha}(H'')> \rho_{\alpha}(G_{u}(p+1, q-1))$, also a contradiction. It follows that $x_{u_{p-i-1}}> x_{v_{q-i-2}}$, i.e., $x_{u_{p-(i+1)}}> x_{v_{q-(i+1)-1}}$.

Therefore  $x_{u_{p-i}}>x_{v_{q-i-1}}$ for $i=0,\dots, q-1$. Particularly, $x_{u_{p-q+1}}>x_{v_0}$.

Now let $H^*$ be the $k$-uniform hypergraph obtained from  $G_{u}(p+1, q-1)$  by moving all the edges containing $u$ except $e_1$ and $f_1$ from $u$ to $u_{p-q+1}$. By Theorem~\ref{moving} and noting that $H^*\cong G_u(p, q)$, we have $\rho_{\alpha}(G_u(p, q))> \rho_{\alpha}(G_u(p+1, q-1))$, a contradiction. Therefore  $\rho_{\alpha}(G_{u}(p, q))> \rho_{\alpha}(G_{u}(p+1, q-1))$.
\end{proof}

The above result has been reported for $k=2$ in \cite{GZ} and $\alpha=0$ in \cite{XW2}.

\begin{Theorem}\label{3} Let $G$ be a $k$-uniform hypergraph with $k\ge 2$, $e=\{v_1,\dots,v_k\}$ be an edge of $G$ with $d_G(v_i)\ge 2$ for $i=1, \dots, r$, and $d_G(v_i)=1$ for $i=r+1, \dots, k$, where $3\le r\le k$. Let $G'$ be the hypergraph obtained from $G$ by moving all edges containing $v_3, \dots, v_r$ but not containing $v_1$  from $v_3, \dots, v_r$ to $v_1$. Then $\rho_{\alpha} (G')> \rho_{\alpha} (G)$ for $0\le \alpha<1$.
\end{Theorem}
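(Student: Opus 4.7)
Let $x$ be the $\alpha$-Perron vector of $G$. The plan is to apply Theorem~\ref{moving} so as to consolidate outside edges at the vertex of $\{v_1,v_3,v_4,\ldots,v_r\}$ with the largest $x$-coordinate, and then to recognise that the resulting hypergraph is isomorphic to $G'$. Concretely, pick $v_j$ with
\[
x_{v_j}=\max\{x_{v_i}:i\in\{1,3,4,\ldots,r\}\}.
\]
If $j=1$, then $x_{v_1}\ge x_{v_i}$ for every $i\in\{3,4,\ldots,r\}$, so Theorem~\ref{moving} applied with $u=v_1$, the source vertices being $v_3,\ldots,v_r$, and the moved edges being exactly those edges of $G$ which contain some $v_i$ with $i\in\{3,\ldots,r\}$ but not $v_1$, immediately yields $\rho_{\alpha}(G')>\rho_{\alpha}(G)$.

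In the remaining case $j\in\{3,\ldots,r\}$, I would introduce an auxiliary hypergraph $G^{*}$ obtained from $G$ by moving, to $v_j$, every edge of $G$ that contains some vertex in $\{v_1,v_3,\ldots,v_r\}\setminus\{v_j\}$ but not $v_j$. Since $x_{v_j}\ge x_{v_i}$ for each source vertex $v_i\in\{v_1,v_3,\ldots,v_r\}\setminus\{v_j\}$, and each moved edge by construction avoids $v_j$, Theorem~\ref{moving} gives $\rho_{\alpha}(G^{*})>\rho_{\alpha}(G)$.

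The heart of the proof is then to show $G^{*}\cong G'$, which I expect to be the main obstacle. I would establish this via the transposition $\varphi\colon V(G)\to V(G)$ that swaps $v_1$ and $v_j$ and fixes every other vertex, verifying case by case that $\varphi$ maps $E(G^{*})$ bijectively onto $E(G')$. The distinguished edge $e$ is a fixed point of $\varphi$, since $\{v_1,v_j\}\subseteq e$. An edge of $G$ containing both $v_1$ and $v_j$ is unchanged in passing from $G$ to either $G'$ or $G^{*}$, and is fixed setwise by $\varphi$. An edge of $G$ containing $v_1$ but not $v_j$ is unchanged in $G'$ and has $v_1$ replaced by $v_j$ in $G^{*}$; applying $\varphi$ sends the former to the latter. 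Symmetrically for edges containing $v_j$ but not $v_1$. An edge meeting $\{v_3,\ldots,v_r\}\setminus\{v_j\}$ but missing both $v_1$ and $v_j$ has one of its vertices replaced by $v_1$ in $G'$ and by $v_j$ in $G^{*}$, and $\varphi$ interchanges these. Edges disjoint from $\{v_1,v_3,\ldots,v_r\}$ are fixed by $\varphi$ and unchanged. Combining yields $\rho_{\alpha}(G')=\rho_{\alpha}(G^{*})>\rho_{\alpha}(G)$.

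The subtle point in the isomorphism check is how edges of $G$ that contain several vertices of $\{v_1,v_3,\ldots,v_r\}\setminus\{v_1\}$ (respectively $\setminus\{v_j\}$) are treated, since the ``moving'' operation must choose a single vertex to replace; I would fix a canonical convention (e.g.\ replace the vertex of smallest index) consistently in the definitions of $G'$ and $G^{*}$ so that $\varphi$ transports one choice to the other. With this convention the edge-by-edge matching is straightforward, and the technical requirement in Theorem~\ref{moving} that the newly created edges do not already belong to $G$ is satisfied because the original $v_i$ of each moved edge lies outside $e$ after the replacement only when such duplication would contradict the degree hypothesis on the $v_i$.
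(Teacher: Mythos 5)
Your proposal is correct and follows essentially the same route as the paper: when $x_{v_1}$ is maximal among $\{x_{v_1},x_{v_3},\dots,x_{v_r}\}$ apply Theorem~\ref{moving} directly, and otherwise move all the relevant edges to the vertex $v_t$ ($=v_j$) of largest Perron entry and observe that the result is isomorphic to $G'$. The only difference is cosmetic: the paper dismisses the isomorphism with ``it is obvious that $G''\cong G'$,'' whereas you verify it explicitly via the transposition swapping $v_1$ and $v_j$ (and, like the paper, you leave the hypothesis $e_i'\notin E(G)$ of Theorem~\ref{moving} essentially unexamined).
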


\begin{proof} Let $x$ be the $\alpha$-Perron vector  of $G$, and $x_{v_t}=\max\{x_{v_i}: 3\leq i\leq r\}$. If $x_{v_1}\geq x_{v_t}$, then by Theorem~\ref{moving}, $\rho_{\alpha}(G')> \rho_{\alpha}(G)$. Suppose that $x_{v_1}< x_{v_t}$. Let  $G''$ be the hypergraph obtained from $G$ by moving all edges containing $v_i$ but not containing $v_t$ from $v_i$ to $v_t$ for all $3\leq i\leq r$ with $i\neq t$, and moving all edges  containing $v_1$ but not containing $v_t$ from $v_1$ to $v_t$. It is obvious that $G'' \cong G'$. By Theorem~\ref{moving}, we have $\rho_{\alpha}(G')=\rho_{\alpha}(G'')> \rho_{\alpha}(G)$.
\end{proof}

\section{Hypergraphs with large $\alpha$-spectral radius}


A hypercactus is a connected $k$-uniform hypergraph in which any two cycles £¨viewed as two hypergraphs) have at most one vertex in common.
Let $H_{m, r, k}$ be a $k$-uniform hypergraph consisting of  $r$ cycles of length $2$ and $m-2r$ pendant edges with a vertex in common. If $r=0$, then $H_{m, r, k}\cong S_{m,k}$. 

\begin{Theorem}\label{cyclic}  For $k\ge 2$, let $G$ be a $k$-uniform hypercactus with $m$ edges and $r$ cycles, where $0\le r\le\lfloor\frac{m}{2}\rfloor$ and $m\ge 2$. For $0\le \alpha<1$, we have $\rho_{\alpha}(G)\le\rho_{\alpha}(H_{m, r, k})$  with equality if and only if $G\cong H_{m, r, k}$.
\end{Theorem}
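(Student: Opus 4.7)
Suppose $G$ is a $k$-uniform hypercactus with $m$ edges and $r$ cycles that attains the maximum of $\rho_\alpha$ among such hypercacti. By Lemma~\ref{irreducibility} it possesses a positive $\alpha$-Perron vector $x$; fix $u \in V(G)$ with $x_u = \max_{v \in V(G)} x_v$. The goal is to show that the maximality of $G$ forces three structural properties, each established by a transformation argument that would otherwise strictly increase $\rho_\alpha$ and yield a contradiction: (i) every cycle of $G$ has length $2$, (ii) every cycle of $G$ contains $u$, and (iii) every edge of $G$ that is not in a cycle is a pendant edge at $u$. Together (i)--(iii) force $G \cong H_{m,r,k}$.

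For (i), suppose a cycle $C = (w_0, f_1, w_1, \dots, w_{s-1}, f_s, w_0)$ of length $s \ge 3$ exists. If $s \ge 4$, the edges $f_1$ and $f_3$ are disjoint, so I would apply Theorem~\ref{switching} with a single-vertex swap between $f_1$ and $f_3$ to produce a new $2$-cycle $\{f_1, f_3'\}$ sharing $\{w_0, w_2\}$ while replacing the residual cycle by a shorter one; iterating reduces $C$ to a single $2$-cycle with $s-2$ pendant attachments, preserving $m$ and $r$ exactly. If $s = 3$ the three edges pairwise overlap, so Theorem~\ref{switching} does not apply directly; instead I would push a vertex from one edge to another through Theorem~\ref{moving} (e.g.\ move $f_3$ from $w_2$ to $w_1$ when $x_{w_1}\ge x_{w_2}$, or the symmetric move in the opposite case), again producing a $2$-cycle plus a pendant edge. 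For (ii), if a $2$-cycle $\{e, f\}$ with $e \cap f \supseteq \{a, b\}$ misses $u$, Theorem~\ref{moving} applied simultaneously to $e$ and $f$ transports both edges from $a$ to $u$, giving edges $e' = (e\setminus\{a\})\cup\{u\}$ and $f' = (f\setminus\{a\})\cup\{u\}$ that still form a $2$-cycle through $\{b, u\}$, and $\rho_\alpha$ strictly increases since $x_u\ge x_a$. For (iii), Theorems~\ref{pq} and \ref{3} collapse any pendant path at $u$ into pendant edges at $u$, while Theorem~\ref{moving} relocates any non-cycle edge that is not already pendant at $u$.

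The main obstacle is Step (i): each vertex swap or edge move must simultaneously (a) preserve the hypercactus property, so that no two cycles end up sharing more than one vertex, (b) keep the number of cycles exactly $r$ (no cycle created or destroyed beyond the intended conversion of the $s$-cycle into a $2$-cycle plus pendant pieces), and (c) satisfy the Perron-value inequalities required by Theorems~\ref{moving} and \ref{switching}. When the asserted inequality direction happens to fail, the reverse comparison between the relevant Perron entries must hold, and a symmetric transformation yields the same contradiction. Once (i)--(iii) are in place, the resulting hypercactus must consist of $r$ two-cycles together with $m - 2r$ pendant edges, all through the common vertex $u$, which is precisely $H_{m,r,k}$.
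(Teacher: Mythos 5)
Your overall strategy --- take a maximizer, fix its $\alpha$-Perron vector, and use Theorems~\ref{moving}, \ref{switching}, \ref{pq} and \ref{3} to force the structure --- is the same as the paper's, but two of your concrete steps do not work as described. In Step~(i), for a cycle $(w_0,f_1,w_1,\dots,f_s,w_0)$ with $s\ge 4$, a single-vertex swap between the disjoint edges $f_1$ and $f_3$ via Theorem~\ref{switching} cannot create a $2$-cycle: the theorem replaces $f_1,f_3$ by $e'=U\cup(f_3\setminus V)$ and $f'=V\cup(f_1\setminus U)$, and since $f_1\cap f_3=\emptyset$ these two new edges are again disjoint, so no pair of edges sharing two vertices appears and the cycle is not shortened. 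The device you reserve for $s=3$ --- moving one cycle edge onto an adjacent intersection vertex by Theorem~\ref{moving} --- is in fact what the paper uses uniformly for every length $\ell\ge 3$, turning the cycle into a $2$-cycle plus a pendant path in one step; your case split is unnecessary and the $s\ge 4$ branch is broken.

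The more serious gap is anchoring Steps~(ii) and~(iii) at the global maximum-entry vertex $u$, whose location in $G$ you cannot control before the structure is known. If a $2$-cycle $\{e,f\}$ is joined to the rest of $G$ (hence to $u$) through a vertex other than the vertex $a$ you move from --- say through $b\in e\cap f$ or through a vertex of $e\setminus f$ --- then replacing $e,f$ by $(e\setminus\{a\})\cup\{u\}$ and $(f\setminus\{a\})\cup\{u\}$ creates a new cycle, namely a relocated edge together with the old path joining that attachment vertex to $u$ (and if $d_G(a)=2$ it also isolates $a$). The resulting hypergraph is then not a hypercactus with $m$ edges and $r$ cycles, so maximality yields no contradiction; the same defect affects relocating distant non-pendant tree edges to $u$. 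The paper avoids this by never privileging the maximum entry: it first makes every edge have at most two vertices of degree $\ge 2$ (Theorem~\ref{3}), shows all cycles share a common vertex $w$ by moving the nearer of two \emph{closest} vertex-disjoint cycles onto the other (a local move that cannot create a cycle), and then eliminates any vertex at distance $2$ from $w$ by a two-sided comparison (moving toward $w$ if $x_w\ge x_{z'}$ and away from $w$ otherwise), so every intermediate hypergraph stays in the class. You would need to replace your global ``move everything to $u$'' steps by such local, two-case moves; you also leave the $r=0$ case (which the paper settles separately via the diameter) to the same flawed mechanism.
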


\begin{proof}  Let $G$ be a $k$-uniform hypercactus with maximum $\alpha$-spectral radius among $k$-uniform hypercacti with $m$ edges and $r$ cycles.

Let $x$ be the $\alpha$-Perron vector  of $G$.

Suppose first that $r=0$, i.e., $G$ is a hypertree with $m$ edges. Let $d$ be diameter of $G$. Obviously, $d\ge 2$. Suppose that $d\ge 3$. Let $(u_{0},e_{1}, u_{1}, \dots,e_{d}, u_{d})$ be a diametral path of $G$. Choose $u\in e_{d-1}$ with $x_u=\max\{x_{v}: v\in e_{d-1}\}$. Let $G_1$ be the hypertree obtained from $G$ by moving all edges (except $e_{d-1}$)   containing a vertex  of  $e_{d-1}$ different from $u$ from these vertices to $u$. By  Theorem~\ref{moving}, we have $\rho_{\alpha}(G_1)> \rho_{\alpha}(G)$, a contradiction. Thus $d=2$, implying that $G\cong S_{m,k}=H_{m,0,k}$.

Suppose in the following that $r\ge 1$.

If there exists an edge  $e$ with at least three vertices of degree at least $2$. Let  $e=\{v_1,\dots,v_k\}$ with $d_G(v_i)\ge 2$ for $i=1, \dots, \ell$, and $d_G(v_i)=1$ for $i=\ell+1, \dots, k$, where $3\le \ell\le k$. Let  $G'$ be the hypergraph  obtained from $G$ by moving all edges containing $v_3, \dots, v_{\ell}$  except $e$  from $v_3, \dots, v_{\ell}$ to $v_1$. Obviously,  $G'$ is a $k$-uniform hypercactus with $m$ edges and $r$ cycles. By Theorem~\ref{3}, $\rho_{\alpha}(G')> \rho_{\alpha}(G)$, a contradiction. Thus, every edge in $G$ has $k-2$ vertices of degree $1$.

Suppose that there exist two vertex-disjoint cycles. We choose two such cycles $C_1$ and $C_2$ by requiring that  $d_G(C_1,C_2)$ is as small as possible, where $d_G(C_1,C_2)=\min\{d_G(u,v): u\in V(C_1), v\in V(C_2)\}$. Let $u\in V(C_1)$ and $v\in V(C_2)$ with $d_G(C_1,C_2)=d_G(u,v)$. We may assume that $x_u\ge x_v$. Let $G''$ be the hypergraph  obtained from $G$ by moving edges containing $v$ in $C_2$ from $v$ to $u$. Obviously,  $G''$ is a $k$-uniform hypercactus with $m$ edges and $r$ cycles.
By Theorem~\ref{moving}, $\rho_{\alpha}(G'')> \rho_{\alpha}(G)$,  a contradiction. Thus, if $r\ge 2$, then all cycles in $G$ share a common vertex, which we denote by $w$. If $r=1$, then $w$ is a vertex of degree $2$ of the unique cycle.

Let $(v_0, e_1, v_1,\dots, v_{\ell-1}, e_{\ell}, v_0)$ be a cycle of $G$ of length $\ell\ge 2$, where $v_0=w$. Suppose that  $\ell\ge 3$. Assume that $x_{v_0}\ge x_{v_2}$. Let $G^*$ be the hypergraph  obtained from $G$ by moving the edge $e_2$ from $v_2$ to $v_0$.  Obviously,  $G^*$ is a $k$-uniform hypercactus with $m$ edges and  $r$ cycles. By Theorem~\ref{moving}, $\rho_{\alpha}(G^*)> \rho_{\alpha}(G)$,  a contradiction. Thus, every cycle of $G$ is of length $2$, and there are exactly $m-2r$ edges that are not on any cycle.

Suppose that $G\not\cong H_{m, r, k}$. Then there exists a vertex $z$ such that $d_G(w,z)=2$. Let $z'$ be the unique vertex such that $d_G(w,z')=d_G(z', z)=1$. There are two cases. First suppose that $z'$ lies on some cycle. Let $e_1$ and $e_2$ the the cycle containing $w$ and $z'$. Let $H$ be the hypergraph obtained from $G$ by moving all edges containing $z'$ except $e_1$ and $e_2$ from $z'$ to $w$ if $x_w\ge x_{z'}$, and the hypergraph obtained from $G$ by moving all edges containing $w$ except $e_1$ and $e_2$ from $w$ to $z$ otherwise.
Now suppose that $z'$ does not lie on any cycle. Let $e$ the the edge containing $w$ and $z'$. Let $H$ be the hypergraph obtained from $G$ by moving all edges containing $z'$ except $e$ from $z'$ to $w$ if $x_w\ge x_{z'}$, and the hypergraph obtained from $G$ by moving all edges containing $w$ except $e$ from $w$ to $z$ otherwise.
%
%
in either case, $H$ is a $k$-uniform hypercactus with $m$ edges and  $r$ cycles.
By Theorem~\ref{moving}, $\rho_{\alpha}(H)> \rho_{\alpha}(G)$,  a contradiction.
It follows that  $G\cong H_{m, r, k}$.
\end{proof}

\begin{Corollary}\label{max} Suppose that $k\ge 2$.\\
 (i) If $G$ is a $k$-uniform hypertree with $m\ge 1$ edges, then $\rho_{\alpha}(G)\le \rho_{\alpha}(S_{m,k})$ for $0\le \alpha<1$ with equality if and only if $G\cong S_{m,k}$. \\
(ii) If $G$ is a $k$-uniform unicyclic hypergraphs with $m\ge 2$ edges, then  $\rho_{\alpha}(G)\le\rho_{\alpha}(H_{m, 1, k})$ for $0\le \alpha<1$ with equality if and only if $G\cong H_{m, 1, k}$.
\end{Corollary}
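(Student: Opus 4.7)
The plan is to read off both claims as immediate specializations of Theorem~\ref{cyclic}, with the only subtlety being parameter matching and the handling of one degenerate case.

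For part (i), I would first observe that, by definition, a $k$-uniform hypertree is precisely a connected $k$-uniform hypergraph containing no cycles, and therefore a $k$-uniform hypercactus with $r=0$. The trivial case $m=1$ requires a one-line separate treatment: any $k$-uniform hypertree with a single edge consists of that edge alone and is isomorphic to $S_{1,k}$, so the inequality holds with equality. For $m\ge 2$, I would invoke Theorem~\ref{cyclic} with $r=0$ and use the identification $H_{m,0,k}\cong S_{m,k}$ recorded just before that theorem to conclude that $\rho_{\alpha}(G)\le\rho_{\alpha}(S_{m,k})$ with equality if and only if $G\cong S_{m,k}$.

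For part (ii), a $k$-uniform unicyclic hypergraph, viewed as a connected $k$-uniform hypergraph containing exactly one cycle, is automatically a $k$-uniform hypercactus with $r=1$: the defining condition that any two cycles share at most one vertex is vacuously satisfied when there is only one cycle. The hypothesis $m\ge 2$ gives $1\le\lfloor m/2\rfloor$, so Theorem~\ref{cyclic} applies with $r=1$ and yields $\rho_{\alpha}(G)\le\rho_{\alpha}(H_{m,1,k})$ together with the equality characterization $G\cong H_{m,1,k}$.

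The main obstacle here is essentially nonexistent; the only points requiring a sentence of care are the vacuous-truth argument that a unicyclic hypergraph qualifies as a hypercactus, and the observation that the $m=1$ case of (i) is outside the range $m\ge 2$ of Theorem~\ref{cyclic} and must be disposed of by hand before invoking the theorem for $m\ge 2$.
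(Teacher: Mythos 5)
Your proposal is correct and follows the same route the paper intends: the corollary is stated immediately after Theorem~\ref{cyclic} precisely because both parts are the specializations $r=0$ (with $H_{m,0,k}\cong S_{m,k}$) and $r=1$ of that theorem. Your extra care with the $m=1$ case of part~(i), which falls outside the hypothesis $m\ge 2$ of Theorem~\ref{cyclic} and is settled by noting that the only hypertree with one edge is $S_{1,k}$ itself, is a small but legitimate point that the paper glosses over.
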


The cases when $\alpha=0$ in Corollary~\ref{max}~(i) and (ii)  have been known in \cite{LSQ,FT}.

For $2\le d\le m$,  let $S_{m,d,k}$ be the $k$-uniform hypertree obtained from the $k$-uniform loose path $P_{d,k}=(v_0, e_1, v_1, \dots, v_{d-1}, e_d, v_d)$ by attaching $m-d$ pendant edges at $v_{\lfloor \frac{d}{2}\rfloor}$. Obviously, $S_{m,2,k}\cong S_{m,k}$.

\begin{Theorem}  \label{diameter} For $k\ge 2$, let $G$ be a $k$-uniform hypertree with $m$ edges and diameter $d\ge 2$. For $0\le \alpha<1$, we have $\rho_{\alpha}(G)\le\rho_{\alpha}(S_{m, d, k})$  with equality if and only if $G\cong S_{m, d, k}$.
\end{Theorem}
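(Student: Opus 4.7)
The plan is to take an extremal $G$---a $k$-uniform hypertree with $m$ edges and diameter $d$ maximizing $\rho_\alpha$---and force $G\cong S_{m,d,k}$ by iterated use of the transformations in Section~4. The case $d=2$ follows from Corollary~\ref{max}(i), since a hypertree of diameter $2$ must be a hyperstar and $S_{m,k}=S_{m,2,k}$; and the case $m=d$ is immediate because the only $k$-uniform hypertree on $m$ edges of diameter $m$ is the loose path $P_{m,k}=S_{m,m,k}$. So assume $d\ge 3$ and $m>d$, let $x$ be the $\alpha$-Perron vector of $G$, and fix a diametral path $P=(u_0,e_1,u_1,\dots,u_{d-1},e_d,u_d)$.

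First I would prune $G$ down to a ``skeleton''. Since $P$ is diametral, $d_G(u_0)=d_G(u_d)=1$. For any $v\in e_j\setminus\{u_{j-1},u_j\}$ with $d_G(v)\ge 2$, the diameter bound forces $2\le j\le d-1$ and restricts the depth of the subtree $T$ rooted at $v$ (through $E_v(G)\setminus\{e_j\}$) to at most $\min(j-1,d-j)$; relocating all edges of $T$ from $v$ to whichever of $u_{j-1},u_j$ has the larger $x$-entry then keeps the diameter equal to $d$ and, via Theorem~\ref{moving}, strictly increases $\rho_\alpha$, contradicting extremality. The residual case $x_v>\max\{x_{u_{j-1}},x_{u_j}\}$ is dispatched by a switching (Theorem~\ref{switching}) on a disjoint pair $\{e_{j-1},f\}$ with $f\in E_v(G)\setminus\{e_j\}$, after verifying the required product comparison of $x$-entries. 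Analogously, any subtree attached at an interior $u_i$ that is not a single pendant edge can be shortened by moving its outer edges back toward $u_i$ via Theorem~\ref{moving}, using the strict decrease of $x$ along a pendant path leaving $u_i$ (which follows by comparing the eigenequations at successive vertices of the path). Hence every non-path edge is a pendant edge attached at some interior $u_i$.

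Next I would consolidate. If pendant edges were attached at two distinct interior vertices $u_i,u_{i'}$, migrating those at the lower-$x$-valued vertex to the other is legal (any pendant edge at any interior $u_i$ respects the diameter bound) and strictly raises $\rho_\alpha$ by Theorem~\ref{moving}; so all $m-d$ pendant edges sit at a single interior $u_\ell$. Let $H$ denote the hyperstar on $u_\ell$ with these $m-d$ pendant edges. Assuming without loss of generality $\ell\le d/2$ (by symmetry), we have $G=H_{u_\ell}(d-\ell,\ell)$. If $\ell<\lfloor d/2\rfloor$, then $d-\ell-1\ge \ell+1\ge 1$ and Theorem~\ref{pq} yields
\[
\rho_\alpha(G)=\rho_\alpha(H_{u_\ell}(d-\ell,\ell))<\rho_\alpha(H_{u_\ell}(d-\ell-1,\ell+1)).
\]
Iterating until the pendant-path lengths reach $(\lceil d/2\rceil,\lfloor d/2\rfloor)$ gives $\rho_\alpha(G)<\rho_\alpha(S_{m,d,k})$, contradicting extremality. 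Thus $\ell=\lfloor d/2\rfloor$ and $G\cong S_{m,d,k}$.

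The main obstacle will be the residual subcase in the first step with $x_v>\max\{x_{u_{j-1}},x_{u_j}\}$: Theorem~\ref{moving} is then unavailable, and one must select an auxiliary disjoint edge $f\in E_v(G)\setminus\{e_j\}$ and carefully verify the comparison $x_{e_{j-1}\setminus\{u_{j-1}\}}\le x_{f\setminus\{v\}}$ (or the analogous comparison using $e_{j+1}$) so that Theorem~\ref{switching} can be invoked. A secondary subtlety is establishing the strict monotone decrease of $x$-entries along a pendant path leaving a higher-degree vertex; this follows by induction from the eigenequation at the end-vertex, propagating the inequality backward along the path using the eigenequation at each degree-$2$ vertex.
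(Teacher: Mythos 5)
Your overall strategy (extremal $G$, prune with the Section~4 transformations, finish with Theorem~\ref{pq}) is the same as the paper's, and the last consolidation step matches the paper's Claim~3 and its final appeal to Theorem~\ref{pq}. But there is a genuine gap exactly where you flag ``the main obstacle'': the residual case $x_v>\max\{x_{u_{j-1}},x_{u_j}\}$ is never resolved. Your proposed fix --- a switching on $\{e_{j-1},f\}$ with $f\in E_v(G)\setminus\{e_j\}$ --- cannot be carried out as stated: Theorem~\ref{switching} needs \emph{both} $x_U\ge x_V$ and $x_{e\setminus U}\le x_{f\setminus V}$ (with one strict), and in your situation the vertex comparison forces you to take $U=\{v\}$, $V=\{u_{j-1}\}$, after which the required inequality $x_{f\setminus\{v\}}\le x_{e_{j-1}\setminus\{u_{j-1}\}}$ has no justification; if it fails, the switching \emph{decreases} the Rayleigh quotient and you get nothing. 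You would also have to check that the post-switch hypergraph is still a hypertree of diameter exactly $d$, which you do not address. So as written the pruning step does not go through.

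The paper closes this hole without ever invoking Theorem~\ref{switching} in this proof. The key observation is that for an on-path edge $e_j$ with $2\le j\le d-1$, both $u_{j-1}$ and $u_j$ already have degree at least $2$, so a third high-degree vertex $v\in e_j$ puts you in the hypothesis of Theorem~\ref{3}; that theorem was engineered precisely for your bad case: if the natural target $v_1$ has a \emph{smaller} Perron entry than some $v_t$, one instead moves everything (including the edges at $v_1$) onto $v_t$, obtaining a hypergraph \emph{isomorphic} to the intended one, so extremality is contradicted either way. (For $j=1$ or $j=d$ the diametral property already forbids a high-degree $v\in e_j\setminus\{u_{j-1},u_j\}$.) The same ``move in whichever direction the Perron vector favors, and observe both outcomes lie in the class'' device is used in the paper's Claims~2 and~3 for off-path subtrees and for consolidating pendant edges; it replaces your unproved monotonicity of $x$ along pendant paths, which is likewise not needed. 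To repair your proof, replace the switching step by an application of Theorem~\ref{3} for on-path edges and by the two-directional use of Theorem~\ref{moving} for subtrees hanging off $P$.
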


\begin{proof} It is trivial for $d=2$. Suppose that $d\ge 3$.

Let $G$ be a $k$-uniform hypertree with maximum $\alpha$-spectral radius among hypertrees with $m$ edges and diameter $d$.

Let $P=(v_0,e_1,v_1,\dots,e_d,v_{d})$ be a diametral  path of $G$. Let $x$ be the $\alpha$-Perron vector  of $G$.

\noindent{\bf Claim $1$.} Every edge of $G$ has at least $k-2$ vertices of degree $1$.

Suppose that there is at least one edge with at least three vertices of degree at least $2$. Let $f=\{u_1, \dots, u_k\}$ be such an edge. First suppose that  $f$ is not an edge on $P$. We may assume that  $d_G(u_1, P)=d_G(u_i, P)-1$ for $i=2, \dots, k$, where $d_G(w,P)=\min \{d_G(w,v): v\in V(P)\}$. Then $d_G(u_1)\ge 2$. We may assume that $d_G(u_i)\ge 2$ for $i=2, \dots r$ and $d_G(u_i)=1$ for $i=r+1, \dots, k$, where $3\le r\le k$.
Let $G'$ be the hypertree obtained  from $G$ by moving all edges containing   $u_{3},\dots,u_{r}$ except $f$  from $u_{3},\dots,u_{r}$ to $u_{1}$. Obviously, $G'$ is a hypertree with $m$ edges and diameter $d$.
By Theorem~\ref{3}, $\rho_{\alpha}(G')> \rho_{\alpha}(G)$, a contradiction. Thus  $f$ is an edge on $P$, i.e., $f=e_i$ for some $i$ with $2\le i\le d-1$.
Let $e_i\setminus\{v_{i-1}, v_i\} =\{v_{i,1},\dots,v_{i,k-2}\}$.
We may assume that $v_{i,1},\dots,v_{i,s}$ are precisely those vertices with degree at least $2$ among $v_{i,1},\dots,v_{i,k-2}$, where $1\le s\le k-2$.
Let $G''$ be the hypertree obtained from $G$ by moving
all edges containing   $v_{i,1},\dots,v_{i,s}$ except $e_i$ from $v_{i,1},\dots,v_{i,s}$ to $v_i$.  Obviously, $G''$ is a hypertree with $m$ edges and diameter $d$.  By Theorem~\ref{3}, $\rho_{\alpha}(G'')> \rho_{\alpha}(G)$, also a contradiction. It follows that all edges of $G$ have at most two vertices of degree at least $2$. Claim 1 follows.

\noindent{\bf Claim $2$.} Any edge not on $P$ is a pendant edge.

Suppose that $e$ is an edge not on $P$ and it is  not a pendant edge. Then there are two vertices,  say $u$ and $v$, in $e$ such that $d_u\ge 2$ and $d_v\ge 2$. Suppose without loss of generality that $d_G(u, P)<d_G(v, P)$. Let $w$ be the vertex on $P$ with  $d_G(u, P)=d_G(u, w)$.
Let $G^*$  be the hypertree obtained from $G$ by moving all edges containing $v$ except $e$ from $v$ to $w$ if $x_w\ge x_v$,  and  the hypertree obtained from $G$ by moving all edges containing $w$ (except the edge in the path connecting $w$ and $v$) from $w$ to $v$ otherwise. By Theorem~\ref{moving}, $\rho_{\alpha}(G^*)> \rho_{\alpha}(G)$,  a contradiction. This proves Claim 2.

\noindent{\bf Claim $3$.} There is at most one vertex of degree greater than two in $G$.

Suppose that there are two vertices, say $s$ and $t$, on $P$  with degree greater than two. We may assume that $x_s\ge x_t$. Let $H$ be the hypertree obtained from $G$ by moving all pendant edges containing  $t$ from $t$ to $s$. By Theorem~\ref{moving}, we have $\rho_{\alpha}(H)> \rho_{\alpha}(G)$, a contradiction. Claim 3 follows.

Combing Claims 1--3, $G$ is a hypertree obtained from the path $P$ by attaching $m-d$ pendant edges at some $v_i$ with $1\le i\le d-1$, and by Theorem~\ref{pq}, we have $G\cong S_{m, d, k}$.
\end{proof}

We mention that the above result for $\alpha=0$ has been proved in \cite{XW2} by a relation between the $0$-spectral radius of a power hypergraph and the $0$-spectral radius of its graph.

Suppose that $m\ge d\ge 3$. Let $H$ be the hypergraph obtained from $S_{m, d, k}$ by moving edge $e_d$ from $v_{d-1}$ to $v_{\lfloor \frac{d}{2}\rfloor}$ if $x_{v_{\lfloor \frac{d}{2}\rfloor}}\ge x_{v_{d-1}}$,
 and the hypergraph obtained from $S_{m, d, k}$ by moving edges containing $v_{\lfloor \frac{d}{2}\rfloor}$ except $e_{\lfloor \frac{d}{2}\rfloor+1}$  from $v_{\lfloor \frac{d}{2}\rfloor}$ to $v_{d-1}$ otherwise. Obviously, $H\cong S_{m, d-1, k}$. By Theorem~\ref{moving}, $\rho_{\alpha}(S_{m, d, k})<\rho_{\alpha}(S_{m, d-1, k})$. Now by Theorem~\ref{diameter}, Corollary~\ref{max}(i) follows. Moreover, if $G$ is a $k$-uniform hypertree
with $m\ge 3$ edges and $G\ncong S_{m,k}$,  $\rho_{\alpha}(G)\le \rho_{\alpha}(S_{m,3,k})$ with equality if and only if $G\cong S_{m,3,k}$, which
has been known for $\alpha=0$  in \cite{LSQ}.

For $2\le t\le m$, let $T_{m,t,k}$ be the $k$-uniform hypertree consisting of $t$ pendant paths of almost equal lengths (i.e., $t-\left(m-t\lfloor\frac{m}{t}\rfloor\right)$ pendant paths of length $\lfloor\frac{m}{t}\rfloor$ and
 $m-t\lfloor\frac{m}{t}\rfloor$ pendant paths of length $\lfloor\frac{m}{t}\rfloor+1$)
 at a common vertex.  Particularly, $T_{m,2,k}$ is just the $k$-uniform loose path $P_{m,k}$.

\begin{Theorem}  Let $G$ be a $k$-uniform hypertree with $m$ edges and $t\ge 2$ pendant edges. For $0\le \alpha<1$, we have $\rho_{\alpha}(G)\le\rho_{\alpha}(T_{m, t, k})$  with equality if and only if $G\cong T_{m, t, k}$.
\end{Theorem}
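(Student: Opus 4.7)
The plan is to identify the extremal hypertree through three structural claims, applying the transformations of Section~4 in ways that preserve both the edge count $m$ and the pendant-edge count $t$. Let $G$ be a $k$-uniform hypertree with $m$ edges and exactly $t\ge 2$ pendant edges attaining the maximum $\alpha$-spectral radius, and let $x$ be its $\alpha$-Perron vector; I will show $G\cong T_{m,t,k}$. \emph{Claim 1}: every edge of $G$ has at most two vertices of degree at least $2$. Otherwise some edge $f=\{v_1,\dots,v_k\}$ has $d_G(v_i)\ge 2$ for $i=1,\dots,r$ with $r\ge 3$, and Theorem~\ref{3} applied to $f$ yields $G'$ with $\rho_\alpha(G')>\rho_\alpha(G)$. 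The transformation keeps $G'$ a hypertree with $m$ edges; each relocated edge retains its pendant/non-pendant character (only one endpoint is moved, and both the old and new attachment vertices have degree $\ge 2$ where relevant), and $f$ still has $v_1,v_2$ of degree $\ge 2$ and so remains non-pendant, so the pendant-edge count stays $t$. This contradicts extremality.

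\emph{Claim 2}: $G$ has at most one vertex of degree at least $3$. Suppose $s,t$ are two such vertices with $x_s\ge x_t$. Pick an edge $f\ni t$ not on the unique $t$-$s$ path in $G$; at least $d_G(t)-1\ge 2$ such edges exist. Then $s\notin f$, and $f':=(f\setminus\{t\})\cup\{s\}\notin E(G)$: otherwise $s$ and some $v\in f\setminus\{t\}$ would be joined by $f'$, yielding a second $t$-$s$ path via $v$ in the tree $G$. Theorem~\ref{moving} with $r=1$, $u=s$, $v_1=t$, $e_1=f$ gives $\rho_\alpha(G')>\rho_\alpha(G)$. Because only one edge is moved, $d_{G'}(t)=d_G(t)-1\ge 2$ and $d_{G'}(s)=d_G(s)+1\ge 4$, so every edge's count of degree-$\ge 2$ vertices is unchanged and the pendant-edge count remains $t$, a contradiction.

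Combining these claims, either $G$ has no vertex of degree $\ge 3$ --- in which case $G$ is the loose path $P_{m,k}$, and a direct count forces $t=2$, so $G\cong P_{m,k}=T_{m,2,k}$ --- or there is a unique vertex $w$ with $d_G(w)\ge 3$. In the latter case every branch at $w$ is a sub-hypertree in which each vertex other than $w$ has $G$-degree $\le 2$, hence is a loose path, i.e., a pendant path at $w$. Each such pendant path contributes exactly one pendant edge, giving $d_G(w)=t$, and $G$ is the spider with $t$ pendant paths at $w$ of lengths $\ell_1,\dots,\ell_t$ summing to $m$. Finally, Theorem~\ref{pq} balances the lengths: if $\ell_i\ge \ell_j+2$ for some $i,j$, writing $G=\widetilde H_w(\ell_i,\ell_j)$ with $\widetilde H$ consisting of $w$ together with the other $t-2$ pendant paths gives $\rho_\alpha(\widetilde H_w(\ell_i-1,\ell_j+1))>\rho_\alpha(G)$, a contradiction. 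Hence the $\ell_i$ differ by at most $1$, which is the defining property of $T_{m,t,k}$.

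The main obstacle is the pendant-edge bookkeeping in Claim~2: naively moving \emph{all} edges at $t$ (those not on the $t$-$s$ path) across to $s$ would reduce $d(t)$ to $1$, turning the remaining edge at $t$ into a new pendant edge and raising the count from $t$ to $t+1$, which would take $G'$ outside the comparison class. Moving just a single edge circumvents this --- it preserves $d(t)\ge 2$ and the pendant count, and still strictly increases $\rho_\alpha$ via Theorem~\ref{moving}.
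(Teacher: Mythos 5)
Your proof is correct and follows essentially the same route as the paper's: eliminate edges with three or more vertices of degree at least $2$ via Theorem~\ref{3}, reduce to at most one vertex of degree at least $3$ by moving a \emph{single} off-path edge via Theorem~\ref{moving} (the paper does exactly this too, so your ``obstacle'' is one the authors also avoid), and then balance the resulting spider's legs with Theorem~\ref{pq}. Your version merely spells out the pendant-edge bookkeeping and the check that the relocated edge is not already present, which the paper leaves implicit.
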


\begin{proof} Let $G$ be a $k$-uniform hypertree with maximum $\alpha$-spectral radius among hypertrees with $m$ edges and $t$ pendant edges. Let $x$ be the $\alpha$-Perron vector of $G$.

Suppose that there exists an edge $e=\{u_1, \dots, u_k\}$ with at least three vertices of degree at least $2$. Let $G'$ be the hypertree obtained  from $G$ by moving all edges containing  $u_{3},\dots,u_{k}$ except $e$  from these vertices to $u_{1}$. Obviously, $G'$ is a hypertree with $m$ edges and $t$ pendant edges.
By Theorem~\ref{3}, $\rho_{\alpha}(G')> \rho_{\alpha}(G)$, a contradiction. It follows that each edge of $G$ has at most two vertices of degree at least $2$.

Suppose that there are two vertices, say $u,v$  with degree greater than $2$. We may assume that $x_u\ge x_v$. Let  $H$ be the hypertree obtained from $G$ by moving an edge not on the path connecting $u$ and $v$ containing $v$ from $v$ to $u$. By Theorem~\ref{moving}, we have $\rho_{\alpha}(H)> \rho_{\alpha}(G)$, a contradiction. Thus, there is at most one vertex of degree greater than $2$ in $G$.

If there is no vertex of degree greater than $2$, then $t=2$, and $G$ is  the $k$-uniform loose path $P_{m,k}$. If there is exactly one vertex of degree greater than $2$, then $t\ge 3$,
$G$ is a hypertree consisting of $t$ pendant paths at a common vertex, and by Theorem~\ref{pq}, we have $G\cong T_{m, t, k}$.
\end{proof}

For $\alpha=0$, this is known in \cite{XW3,ZL}.

\bigskip

\noindent {\bf Acknowledgement.} This work was supported by the National Natural Science Foundation of China (No.~11071089).


\end{document}